\documentclass[preprint, 12pt]{elsarticle}
\usepackage{amsmath,amsfonts,amsthm,graphicx}
\usepackage{setspace,xcolor}
\usepackage[a4paper, margin=1in]{geometry}
\usepackage[colorlinks=true,
linkcolor=blue,
citecolor=blue,
urlcolor=blue]{hyperref}
\newtheorem{theorem}{Theorem}[section]
\theoremstyle{definition}
\newtheorem{exam}{Example}[section]
\newtheorem{lemma}{Lemma}[section]
\newtheorem{corollary}{Corollary}[section]
\theoremstyle{definition}
\newtheorem{defn}{Definition}[section]
\newtheorem{Rem}{Remark}[section]
\newtheorem*{prf}{Proof}
\DeclareMathOperator{\tr}{\mathbf{trace}}
\begin{document}
\begin{abstract}		 
Identifying the collection of scalars that represent a non-negative matrix's eigenvalues is known as the non-negative inverse eigenvalue problem (NIEP). Conditions for the existence of a non-negative matrix with a certain spectrum are examined in this work. The classical NIEP restricted to non-negative matrices having a persymmetric structure is the persymmetric non-negative inverse eigenvalue problem (PNIEP). We resolve the open problem stated in \cite{Ana}, and furthermore, equivalence of the PNIEP and NIEP is established for trace-zero spectra of five complex numbers. Also we obtain new sufficient conditions for the realizability of certain classes of spectra with non-negative persymmetric matrix realization. Based on generic structural features of persymmetric matrices and their characteristic polynomials, our method is constructive in nature.  The effects of perturbations on imaginary part of complex eigenvalues are also analyzed and some perturbation results are derived.
\end{abstract}
\begin{keyword}
Non-negative matrix, Non-negative inverse eigenvalue problem, Persymmetric matrix, Persymmetric inverse eigenvalue problem
\end{keyword}
\begin{frontmatter}
\title{\textsc{Non-negative persymmetric realizability of certain classes of spectra }}
\author[inst1]{Nayanthara }
\ead{nayan20@cusat.ac.in}
\author[inst1]{Noufal Asharaf}
\ead{noufal@cusat.ac.in}
\address[inst1]{Department of Mathematics, Cochin University of Science and Technology, Kerala, India.}
\end{frontmatter}
\section{Introduction}
Every square matrix of order $n$ has a set of $n$ eigenvalues, which are given by the roots of its characteristic polynomial. A natural converse question is: for a prescribed set of $n$ scalars (real or complex), does there exist a matrix of order $n$ having these scalars as its eigenvalues? This question has given rise to the Inverse Eigenvalue Problems (IEP), a central and active area of research within matrix theory. The IEP is concerned with identifying those sets of scalars that can occur as the eigenvalues of some matrix, along with determining the necessary and sufficient conditions for such occurrences. A set of scalars is said to be realizable if there exists a matrix whose eigenvalues coincide with the given set; such a matrix is called a realizing matrix.
The study of the IEP dates back nearly a century, with its origins often attributed to Kolmogorov’s work on stochastic matrices \cite{Kol}. Over time, the problem has gained significance due to its diverse applications. The problem becomes particularly intriguing when the matrix is required to satisfy additional constraints, such as non-negativity, symmetry, or persymmetry. The primary challenges lie in establishing solvability conditions and developing computational techniques for reconstructing matrices from prescribed spectral data, a task that remains both theoretically rich and computationally demanding.  The \textit{Non-negative inverse eigenvalue problem(NIEP)} is the problem of characterizing the set of scalars to be the eigenvalues of a non-negative matrix. The problem addresses the necessary and sufficient conditions for the existence of a non-negative matrix corresponding to a given set of scalars. The literature on the Non-negative Inverse Eigenvalue Problem (NIEP) is extensive, containing numerous sufficient conditions and a few necessary ones that characterize realizable spectra. Several studies have also explored the interrelations among these sufficient conditions. The NIEP has been solved for sets of small order $n.$ Nevertheless, a complete solution for general $n$ remains unknown.

The problem was resolved by Lowey and London in \cite{low} for the case $n=3$. Later, Meehan solved for $n=4$ in \cite{mee} and, independently, by Torre-Mayo et al. \cite{tor} in terms of the coefficients of the characteristic polynomial. The case of a list of five complex numbers with zero sum was solved by Laffey and Meehan in \cite{lame}. Laffey and Smigoc proved the general case for a specific set of complex numbers in \cite{lasmi}. This problem remains open for the case $n \geq 5.$ Imposing structural constraints on a matrix often increases the complexity of the inverse eigenvalue problem. In recent years, researchers have begun exploring the Persymmetric Non-negative Inverse Eigenvalue Problem (PNIEP), which restricts the classical Non-negative Inverse Eigenvalue Problem (NIEP) to matrices with a persymmetric structure. Persymmetric realizability imposes stricter conditions than general realizability. The study of persymmetric matrices offers valuable insights into how structural constraints affect the eigenvalues of a matrix and their interrelationships. From an applied perspective, persymmetric matrices naturally arise in fields such as signal processing, statistics, and combinatorial matrix theory, where symmetry patterns are inherent. As the general NIEP becomes increasingly complex and remains unsolved for matrices of higher orders, it is meaningful to investigate the problem within the restricted setting of persymmetric matrices, where the imposed structure may yield new theoretical insights and tractable subclasses. This work focuses on constructing non-negative matrices that possess a persymmetric structure corresponding to a given set of complex numbers.  A. I. Julio et al. in \cite{Ana} presented a notable result establishing that the NIEP and the PNIEP are equivalent for matrices of order $n=3$. They also provided a sufficient condition for a realizable list of five complex numbers with zero sum to serve as the spectrum of a persymmetric non-negative matrix. The authors raised the question of whether a set of five scalars with zero sum is persymmetrically realizable, assuming that it is already realizable but does not satisfy their sufficient condition. This open problem is addressed here, and a complete solution is given. Moreover, an equivalence between the NIEP and the PNIEP for trace-zero realizable spectra of order five is established. In addition, sufficient conditions are provided for the existence of a non-negative persymmetric matrix corresponding to certain sets of complex numbers. The realizability of the spectrum is also investigated when the imaginary part of a complex eigenvalue is perturbed. Perturbation results are established for both sets of four complex numbers with zero sum and sets of five complex numbers consisting of one element with a positive real part, the Perron eigenvalue, and all remaining elements with non-positive real parts.
\section{Non-negative Inverse Eigenvalue Problem}\label{sec:NIEP}
A square matrix $A:=(a_{ij})$ of order $n$ is said to be non-negative if $a_{ij} \geq 0$ for all $i,j=1,2,\ldots n.$ The $\tr(A)$ is the sum of it's diagonal entries, which is also equal to the the sum of it's eigenvalues.
For $\Delta :=\{\delta_1,\delta_2,\ldots,\delta_n\}$ the spectrum of A, we define the $k^{th}$ moment of $\Delta$ given by 
\[s_k (\Delta) :=  \delta_1^k+\delta_2^k+\cdots+\delta_n^k,\]
the sum of $k^{th}$ powers of elements in the spectrum, that is equal to the $\tr(A^k)$. If there exist a non-negative matrix $A$ whose spectrum is the set $\Delta$, then it is said to be \textit{realizable}. The matrix $A$ is called  the \textit{realizing matrix} of $\Delta$.
\begin{defn}
A \textit{persymmetric} matrix is an $n \times n$ matrix $A:=(a_{ij})$ whose entries satisfy
\[a_{ij}=a_{n-j+1, n-i+1} \quad \forall i,j \in {1,2,\ldots,n}\]
i.e., matrix A is \textit{persymmetric} if and only if $JAJ=A^{T},$ where $J$ is the $n \times n$ permutation matrix containing the units in the skew-diagonal and zeroes elsewhere. Persymmetric matrices are symmetric with respect to the anti-diagonal.
\end{defn}
\begin{defn}
Consider the monic polynomial $p(z) := z^n+a_{n-1}z^{n-1}+\cdots+a_1z +a_0$ of degree $n$ whose zeros are in $\Delta$ including multiplicities.  Newton’s identities provide a systematic relationship between the coefficients of this polynomial and the corresponding power sums of its roots, which can be expressed as a recurrence relation as follows:
\begin{equation}\label{newton}
	ka_{n-k}+s_1a_{n-k+1}+s_2a_{n-k+2}+\cdots+s_k a_n=0, \quad k=1,2,\ldots,n.
\end{equation}
\end{defn}

The literature on NIEP consists of many necessary and sufficient conditions which realizes a given set of scalars. Some of the established necessary conditions for a spectrum to be realizable are:
\begin{enumerate}[(i)]
\item The set will be closed under complex conjugation, $\Delta(A) = \Bar{\Delta}(A).$
\item Trace of powers of matrix $A$ will be non-negative i.e, $s_k(\Delta) = \sum \limits_{i=1}^{n} \delta_{i}^k \geq 0$ for all psitive integers $k.$
\item $\displaystyle \max_{1 \leq i\leq n}\{|\delta_i| :\delta_i \in \Delta\}$ belongs to $\Delta$, by Perron-Frobenius theorem \cite{Horn-John}.
\item An inequality involving $k^{th}$ moment proved by Lowey and London in \cite{low} and Johnson independently in \cite{john} given by
\[s_{k}^m \leq n^{m-1}s_{km}\] 
 for all positive integres $k$ and $m$.
\end{enumerate}
In addition to the necessary conditions, several sufficient conditions have also been established. The reader can refer \cite{niep} for details. The first known sufficient condition, due to Suleĭmanova \cite{Sulie} , asserts that any real list with exactly one positive value, all other values negative, and a non-negative sum is realizable as the spectrum of a nonnegative matrix, with the unique positive value serving as the Perron root. Other sufficient conditions for the Real-NIEP utilize partitions of the spectrum, where the list is divided into smaller sublists, each satisfying a Suleĭmanova-type condition, and these sublists are then combined to realize the full spectrum. Additional sufficient conditions, beyond partition methods, involve prescribing the diagonal entries of a matrix that realizes part of the spectrum and subsequently extending this partial realization to obtain the full spectrum. Analogous sufficient conditions have also been developed for the symmetric NIEP. The inclusion relationships and mutual independence among these well-known sufficient conditions have been investigated, revealing that while some conditions imply others, many remain incomparable.

The Nonnegative Inverse Eigenvalue Problem (NIEP) has been solved for spectra of small cardinality. By applying the Cauchy–Schwarz inequality, Loewy and London established a new necessary condition, thereby resolving the NIEP for sets of scalars of order $n = 3$.
\begin{theorem}\cite{low}
Let $\Delta:=\{\delta_1, \delta_2, \delta_3\}$be a list of complex numbers with $\delta_1 \geq |\delta_j|, j=2,3.$ Then $\Delta$ is realizable if and only if
$\Delta(A) = \Bar {\Delta}(A), 
s_1 \geq0\,\,  \text{and}\,\,  s_1^2 \leq 3s_2.$
\end{theorem}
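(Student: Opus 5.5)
Necessity comes first. If $A\ge 0$ is a $3\times 3$ realizing matrix, its characteristic polynomial has real coefficients, so $\Delta=\bar\Delta$, and $s_1=\tr(A)\ge 0$. For the moment inequality, write $s_1=a_{11}+a_{22}+a_{33}$ and
\[s_2=\tr(A^2)=\sum_i a_{ii}^2+2\sum_{i<j}a_{ij}a_{ji}.\]
Non-negativity gives $s_2\ge \sum_i a_{ii}^2$. Cauchy--Schwarz then gives $s_1^2\le 3\sum_i a_{ii}^2\le 3s_2$. This is also condition (iv) with $k=1$, $m=2$, $n=3$.

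Sufficiency will be proved by explicit construction, split into the real and complex cases. In both cases the hypotheses force the list to be $\delta_1\ge0$ together with either a real pair or a conjugate pair.

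\textbf{Complex case.} Let $\Delta=\{\rho,\ a+ib,\ a-ib\}$ with $b>0$ and $\rho\ge\sqrt{a^2+b^2}$. Then
\[s_1=\rho+2a,\qquad s_2=\rho^2+2a^2-2b^2.\]
A short computation shows that $s_1^2\le 3s_2$ is equivalent to $b\le\sqrt3(\rho-a)$. I would realize the spectrum by a matrix of the form
\[A=\alpha I+\beta P+\gamma P^2,\]
where $P$ is the cyclic permutation matrix. Its eigenvalues are $\alpha+\beta\omega^k+\gamma\omega^{2k}$ with $\omega=e^{2\pi i/3}$.

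Solving for the coefficients gives
\[\alpha=\frac{s_1}{3},\qquad \beta=\frac{\rho-a}{3}+\frac{b}{\sqrt3},\qquad \gamma=\frac{\rho-a}{3}-\frac{b}{\sqrt3}.\]
Here $\alpha\ge0$ because $s_1\ge0$, and $\beta\ge0$ trivially since $\rho\ge a$. The condition $\gamma\ge0$ is exactly $b\le\sqrt3(\rho-a)$, which is the moment inequality.

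\textbf{Real case.} Let $\delta_1\ge\delta_2\ge\delta_3$ with $\delta_1\ge|\delta_j|$ and $s_1\ge0$. If $\delta_2,\delta_3\ge0$, the diagonal matrix works. If $\delta_3<0\le\delta_2$, take $\mathrm{diag}(\delta_2)\oplus B$ with
\[B=\begin{pmatrix}\frac{\delta_1+\delta_3}2&\frac{\delta_1-\delta_3}2\\[2pt] \frac{\delta_1-\delta_3}2&\frac{\delta_1+\delta_3}2\end{pmatrix},\]
which is non-negative because $\delta_1\ge|\delta_3|$. If both $\delta_2,\delta_3<0$, the list is Sule\u{\i}manova type since $s_1\ge0$, so it is realizable; explicitly a companion-type matrix works, since the characteristic polynomial $(x-\delta_1)(x-\delta_2)(x-\delta_3)$ has non-positive lower coefficients after shifting out the trace. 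In the real case the moment inequality is never needed.

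The main obstacle is the complex case, namely finding the right ansatz so that the single inequality $s_1^2\le 3s_2$ is exactly the non-negativity condition. The circulant choice makes this transparent. I would double-check that the equivalence $s_1^2\le3s_2\iff b\le\sqrt3(\rho-a)$ holds without sign issues: it comes from
\[3s_2-s_1^2=2\bigl[(\rho-a)^2-3b^2\bigr],\]
and $\rho-a\ge0$, so the inequality reduces to $\sqrt3\,b\le\rho-a$.
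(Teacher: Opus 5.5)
The paper gives no proof of this statement. It is quoted from Loewy and London, with only the remark that their necessary condition comes from the Cauchy--Schwarz inequality, so there is nothing to compare against. Your proof is correct and complete.

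Your necessity argument is exactly that Cauchy--Schwarz route: for $A\ge 0$ one has $\tr(A^2)\ge\sum_i a_{ii}^2$, and Cauchy--Schwarz on the diagonal finishes it. For sufficiency, the identity $3s_2-s_1^2=2[(\rho-a)^2-3b^2]$, the circulant coefficients $\alpha,\beta,\gamma$, and the real subcases all check out.

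The only step you leave implicit is the Sule\u{\i}manova subcase, and it takes one line. Since $\delta_2,\delta_3<0$, we have $e_3=\delta_1\delta_2\delta_3\ge 0$. Since $\delta_1\ge-(\delta_2+\delta_3)$ and $\delta_2+\delta_3<0$, we get $e_2=\delta_1(\delta_2+\delta_3)+\delta_2\delta_3\le-(\delta_2^2+\delta_2\delta_3+\delta_3^2)\le 0$. Hence the companion matrix of $x^3-s_1x^2+e_2x-e_3$ is non-negative.

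Given the theme of this paper, your construction nearly yields persymmetric realizations for free, i.e.\ the $n=3$ NIEP/PNIEP equivalence of Julio et al.

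\begin{itemize}
\item The circulant $\alpha I+\beta P+\gamma P^2$ is Toeplitz, hence persymmetric.
\item When $\delta_2\ge 0$, swapping the first two coordinates turns $\mathrm{diag}(\delta_2)\oplus B$ into $\begin{pmatrix}x&0&y\\0&\delta_2&0\\y&0&x\end{pmatrix}$, with $x=\frac{\delta_1+\delta_3}{2}$ and $y=\frac{\delta_1-\delta_3}{2}$. This matrix is persymmetric and also covers $\delta_3\ge 0$. Your plain diagonal matrix is persymmetric only if its $(1,1)$ and $(3,3)$ entries agree.
\item In the Sule\u{\i}manova case, shift by $s_1/3$, let $\mu_i=\delta_i-s_1/3$, and let $s_k'$ be their moments. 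The Toeplitz matrix with diagonal $s_1/3$, superdiagonal $1$, subdiagonal $s_2'/4$ and $(3,1)$ entry $s_3'/3$ has characteristic polynomial $q^3-\frac{s_2'}{2}q-\frac{s_3'}{3}$ in $q=x-s_1/3$. This is exactly $\prod_i(q-\mu_i)$, and $s_3'=3\mu_1\mu_2\mu_3>0$ because $\mu_1>0>\mu_2,\mu_3$.
\end{itemize}
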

R.Reams worked on the set of four complex numbers with zero sum and derived a characterization for their realizability.
\begin{theorem}\cite{reams}\label{realizing four}
Let $\Delta:=\{\delta_1, \delta_2, \delta_3.\delta_4\}$ be a list of complex numbers. Then $\Delta$ with $s_1=0$ is realizable if and only if
\[s_2\geq 0, \quad s_3\geq 0, \quad 4s_4\geq s_2^2\]
\end{theorem}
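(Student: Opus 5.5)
Necessity follows from the general necessary conditions listed above. For sufficiency, the plan is to write down an explicit non-negative companion-type matrix whose characteristic polynomial is $p(z)=z^4+a_2z^2+a_1z+a_0$.

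\textbf{Necessity.} If $\Delta$ is realized by a non-negative $A$, then $s_2=\tr(A^2)\ge 0$ and $s_3=\tr(A^3)\ge 0$ by condition (ii). The inequality $4s_4\ge s_2^2$ is condition (iv) with $n=4$, $k=2$, $m=2$.

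\textbf{Translating to coefficients.} Since $s_1=0$, we have $a_3=0$. Newton's identities \eqref{newton} give
\[a_2=-\tfrac{s_2}{2},\qquad a_1=-\tfrac{s_3}{3},\qquad a_0=\tfrac{s_2^2-2s_4}{8}.\]
The hypotheses therefore read $a_2\le 0$, $a_1\le 0$ and $a_0\le \tfrac{a_2^2}{4}$. Note that $a_0$ may be positive.

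\textbf{Easy case $a_0\le 0$.} The companion matrix of $p$ has last row $(-a_0,-a_1,-a_2,0)$, ones on the superdiagonal and zeros elsewhere. All its entries are non-negative, so it realizes $\Delta$.

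\textbf{Main case $0<a_0\le a_2^2/4$.} Here the companion matrix fails, and this is the main obstacle. The idea is a four-cycle-supported matrix with an extra $2$-cycle, chosen so that a positive constant term is produced. Concretely, take
\[
A=\begin{pmatrix}0&1&0&0\\ x&0&1&0\\ 0&0&0&1\\ w&v&u&0\end{pmatrix},\qquad x,u,v,w\ge 0.
\]
Compute $\det(zI-A)$ by cofactor expansion and match it to $z^4+a_2z^2+a_1z+a_0$. The expected identification is:
\begin{itemize}
\item $z^2$-coefficient: $-(x+u)=a_2$;
\item $z$-coefficient: $-v=a_1$;
\item constant term: $xu-w=a_0$.
\end{itemize}
Choose $x=u=-a_2/2\ge 0$, so that $xu=a_2^2/4$. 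Then set $w=a_2^2/4-a_0\ge 0$, which is exactly the condition $4s_4\ge s_2^2$. Finally set $v=-a_1\ge 0$, which is exactly $s_3\ge 0$.

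The delicate step is verifying the determinant expansion, in particular the sign of the $xu$ term from the two disjoint $2$-cycles $(1\,2)$ and $(3\,4)$. Two disjoint transpositions form an even permutation, so this term enters the constant coefficient with a $+$ sign, as needed. The remaining cycles to account for are the $3$-cycle through $2\to3\to4\to2$, which gives $v$ in the linear term, and the $4$-cycle, which gives $w$ in the constant term with sign $-$.

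If the coefficients came out differently, I would adjust the placement of $v$ and $w$ accordingly; the structure of the argument is unchanged.
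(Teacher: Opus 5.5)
Your proof is correct. The paper gives no proof of this statement to compare against: it quotes the result from Reams and uses it as a black box. Your argument is a complete, self-contained proof in the same spirit as the paper's own constructions: choose a sparse non-negative pattern, compute its characteristic polynomial, and match coefficients via Newton's identities \eqref{newton}.

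The individual steps check out:
\begin{itemize}
\item Necessity follows from $\tr(A^k)\ge 0$ together with condition (iv) taken with $n=4$, $k=m=2$.
\item Newton's identities give $a_2=-s_2/2$, $a_1=-s_3/3$ and $a_0=(s_2^2-2s_4)/8$.
\item The equivalence $a_0\le a_2^2/4\iff 4s_4\ge s_2^2$ holds.
\item Direct cofactor expansion gives $\det(zI-A)=z^4-(x+u)z^2-vz+(xu-w)$, so your closing hedge is unnecessary.
\end{itemize}

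Two small remarks. First, the case split is superfluous. Your choice $w=a_2^2/4-a_0$ is non-negative under the hypothesis whatever the sign of $a_0$, so the main construction already covers $a_0\le 0$.

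Second, you can get persymmetry for free. Put $v/2$ at both positions $(3,1)$ and $(4,2)$ instead of $v$ at $(4,2)$ alone. The characteristic polynomial is unchanged, because the new $3$-cycle $1\to2\to3\to1$ supplies the other $v/2$ in the linear coefficient and creates no new $4$-cycles. With $x=u$, your matrix then becomes the persymmetric matrix
\[
\begin{bmatrix}
0&1&0&0\\
\frac{s_2}{4}&0&1&0\\
\frac{s_3}{6}&0&0&1\\
\frac{4s_4-s_2^2}{16}&\frac{s_3}{6}&\frac{s_2}{4}&0
\end{bmatrix}.
\]
This is exactly the $4\times 4$ analogue of the matrix used in Case 1 of Theorem \ref{main theorem}. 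It shows directly that every realizable trace-zero list of four numbers is also persymmetrically realizable.
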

E.Meehan completely solved NIEP for $n=4$ in \cite{mee}. Later Laffey and Meehan presented a pioneering work by analyzing spectra consisting of five complex scalars satisfying $s_1 = 0.$  
\begin{theorem}\label{realizing three}
\cite{lame}
Let $\Delta:=\{\delta_1, \delta_2, \delta_3,\delta_4,\delta_5\}$ be a list of five complex numbers and let $s_k = \delta_1^k+\delta_2^k+\delta_3^k+\delta_4^k+\delta_5^k$ for $k=1,2,\ldots.$ Assume $s_1 =0$. Then $\Delta$ is the spectrum of a non-negative $5 \times 5$ matrix if and only if the following three conditions are satisfied:
\begin{enumerate}[(i)]
\item $s_k \geq 0,\quad k =2,3,4,5.$
\item $4s_4 \geq s_2^2$, and
\item $12s_5 - 5s_2s_3 +5s_3\sqrt{4s_4 - s_2^2} \geq 0$
\end{enumerate}
\end{theorem}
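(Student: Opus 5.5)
Laffey and Meehan's theorem is cited from \cite{lame}, so the natural plan is to reconstruct its proof in two directions: the conditions are necessary, and every list satisfying them is realized by an explicit companion-type non-negative matrix.

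\emph{Necessity.} Conditions (i) come from $s_k=\tr(A^k)\ge 0$. For (ii) and (iii) we write $A=(a_{ij})$ with $\tr A=0$. By Newton's identities (\ref{newton}) with $s_1=0$, the characteristic polynomial is
\[
z^5-\tfrac{s_2}{2}z^3-\tfrac{s_3}{3}z^2+c_1z+c_0 ,
\]
where $c_1,c_0$ are determined by $s_4$, $s_5$ and $s_2$. Since $\tr A=0$ and $A\ge 0$, every diagonal entry vanishes. Hence $s_2=\tr(A^2)=2\sum_{i<j}a_{ij}a_{ji}$ and $s_3=3\sum(\text{3-cycle products})$. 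The quantity $s_4$ collects the 2-cycle squares, the 4-cycles, and the terms $a_{ij}a_{ji}a_{ik}a_{ki}$ sharing a vertex. The inequality $4s_4\ge s_2^2$ is then obtained by comparing $\sum_{\text{vertices}}(\sum_j a_{ij}a_{ji})^2$ with $(\sum_i\sum_j a_{ij}a_{ji})^2$ through a Cauchy--Schwarz/counting argument adapted to five vertices. This is a version of the Loewy--London--Johnson moment inequality, sharpened by the zero diagonal.

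For (iii), we expand $s_5=\tr(A^5)$ into 5-cycles and (2-cycle)$\times$(3-cycle) products sharing vertices. The aim is the bound
\[
12s_5\ge 5s_2s_3-5s_3\sqrt{4s_4-s_2^2}.
\]
The plan is to write $s_2s_3$ as a sum over pairs (2-cycle, 3-cycle). Disjoint pairs on five vertices do not contribute to $s_5$, so they must be controlled by the excess $4s_4-s_2^2$, which measures how concentrated the 2-cycles are. The inequality between $\sum x_iy_i$ and norms, applied to vertex-weighted 2-cycle and 3-cycle sums, should give exactly the square-root term.

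\emph{Sufficiency.} Assuming (i)--(iii), we construct a realizing matrix. We take a sparse non-negative matrix with zero diagonal whose characteristic polynomial is
\[
z^5-\alpha z^3-\beta z^2-\gamma z-\delta ,
\]
with entries chosen as functions of the $s_k$; a companion-like pattern with one free parameter $t\ge0$ is a natural choice. Writing the coefficients in terms of $s_2,\dots,s_5$ via (\ref{newton}), we solve for the entries. The free parameter is chosen as
\[
t=\tfrac12\left(s_2\pm\sqrt{4s_4-s_2^2}\right)
\]
so that the entry matching $s_4$ is non-negative, and the remaining entry, matching $s_5$, is then non-negative precisely when (iii) holds.

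The main obstacle is the necessity of (iii). The cycle bookkeeping on five vertices, in particular bounding the disjoint 2-cycle/3-cycle pairs by the square-root term, is delicate, and I expect to need a case split on how the 2-cycle mass is distributed among the vertices.
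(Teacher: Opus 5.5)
The paper does not prove this statement. It is quoted from \cite{lame} and used as a black box, so your proposal has to stand on its own, and it does not yet.

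\textbf{Necessity.} The heart of the theorem, the necessity of (iii), is only announced (``should give exactly the square-root term''). The tool you name for it, a Cauchy--Schwarz estimate on vertex-weighted sums, is not what makes it work. Your route to (ii) has the same defect: bounding $s_4$ below by $\sum_i ((A^2)_{ii})^2$ and applying Cauchy--Schwarz over five vertices can only give $5s_4\ge s_2^2$.

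The missing idea is to work with the coefficients of $\det(zI-A)=z^5-pz^3-qz^2-rz-u$ rather than with traces. With zero diagonal and $x_{ij}=a_{ij}a_{ji}$, the cycle expansion of the determinant gives:
\begin{itemize}
\item $p=\sum_e x_e$;
\item $q=\sum_T y_T$, where $y_T$ is the total weight of 3-cycles on the triple $T$;
\item $-r=D-W_4$, where $D=\sum x_ex_f$ over pairs of disjoint edges and $W_4$ is the total 4-cycle weight;
\item $u=W_5-\sum_T x_{\bar T}y_T$, where $W_5$ is the total 5-cycle weight and $\bar T$ is the pair of vertices left over by $T$ (on five vertices a 3-cycle is disjoint from exactly one possible 2-cycle).
\end{itemize}
Newton's identities then give $s_2=2p$, $s_3=3q$, $4s_4-s_2^2=4(p^2+4r)$ and $s_5=5(pq+u)$. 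Condition (iii) becomes exactly $u+q\tau\ge 0$, where $\tau=\tfrac14(s_2+\sqrt{4s_4-s_2^2})$ is the larger root of $\lambda^2-p\lambda-r$.

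Condition (ii) follows from $p^2\ge 4D$. This holds because five points admit no three pairwise disjoint pairs: the disjointness graph on the edges of $K_5$ is the triangle-free Petersen graph, so the Motzkin--Straus (weighted Mantel) bound applies. This, not vertex Cauchy--Schwarz, is where $4$ instead of $5$ comes from.

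Condition (iii) follows from $u+q\tau=W_5+\sum_T(\tau-x_{\bar T})y_T$, once you know every 2-cycle weight is at most $\tau$. Let $M=\max_e x_e$. If $2M\le p$ this is immediate, because $\tau\ge p/2$. If $2M>p$, every disjoint pair avoiding the heaviest edge contains an edge meeting it, so $D\le M(p-M)$. Hence $M^2-pM-r\le 0$, and so $M\le\tau$. The estimate you need is therefore an $\ell^\infty$ bound on the 2-cycle weights, with the single case split $2M\le p$ versus $2M>p$.

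\textbf{Sufficiency.} Your sufficiency step also fails as stated. Choosing $t$ as a root of $\lambda^2-p\lambda-r$ splits $z^4-pz^2-r$ into two disjoint 2-cycles of weights $\tau$ and $p-\tau$. Both are non-negative only when $2s_4\le s_2^2$. If $2s_4>s_2^2$, then $r>0$ and one root is negative whichever sign you take, so no single choice of $t$ makes the ``$s_4$ entry'' non-negative. Two further corrections:
\begin{itemize}
\item The 2-cycle complementary to the 3-cycle must receive the larger root; the other root yields a stronger condition than (iii).
\item Since the 2-cycle weights sum to $s_2/2$, the relevant root is $\tfrac14(s_2+\sqrt{4s_4-s_2^2})$, not $\tfrac12(\cdots)$.
\end{itemize}
A case split repairs this. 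The matrix
\[
\begin{bmatrix}
0&1&0&0&0\\
x&0&1&0&0\\
0&0&0&1&0\\
0&0&y&0&1\\
u+xq&r+xy&q&0&0
\end{bmatrix}
\]
has characteristic polynomial $z^5-(x+y)z^3-qz^2-rz-u$. Take $x=\min\{p,\tau\}$ and $y=p-x$.
\begin{itemize}
\item If $2s_4\le s_2^2$, then $x=\tau$, the $(5,2)$ entry is $0$, and the $(5,1)$ entry is $\tfrac1{60}(12s_5-5s_2s_3+5s_3\sqrt{4s_4-s_2^2})$.
\item If $2s_4>s_2^2$, then $x=p$ and $y=0$, the $(5,2)$ entry is $r>0$, and the $(5,1)$ entry is $s_5/5\ge 0$ by (i).
\end{itemize}
In the second regime (iii) is already implied by $s_5\ge 0$. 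So your claim that the last entry is non-negative ``precisely when (iii) holds'' is true only in the first regime.
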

Following this, Laffey and Šmigoc  gave a generalized necessary and sufficient condition for the realizability of a list of $n$ complex numbers in which every eigenvalue except the Perron eigenvalue is complex and has a non-positive real part. The following lemma, which was central to their analysis, is recalled below.
\begin{lemma}\cite{lasmi}\label{lemma}
Consider a non-negative real number $t$ and let $\delta_2,\delta_3,\ldots,\delta_n$ be complex numbers with real parts less than or equal to zero, such that the list $(\delta_2,\delta_3,\ldots,\delta_n)$ is closed under complex conjugation. Let $\rho =2t-\delta_2-\cdots-\delta_n,$ and
\begin{center}
  $\displaystyle f(x)=(x-\rho)\prod_{j=2}^{n}(x-\delta_j)=x^n -2tx^{n-1}+b_2x^{n-2}+\cdots+b_n,$ then
\end{center}
$b_2 \leq 0$ implies $b_j \leq 0$ for $j=3,4,\ldots,n.$
\end{lemma}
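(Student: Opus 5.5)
The approach is to separate the Perron factor from the rest. Write $g(x)=\prod_{j=2}^{n}(x-\delta_j)=x^{n-1}+c_1x^{n-2}+\cdots+c_{n-1}$. Because the list is closed under conjugation and $\operatorname{Re}\delta_j\le 0$, $g$ is a product of linear factors $x+a$ with $a\ge 0$ and quadratic factors $x^2+2ax+a^2+\beta^2$ with $a\ge 0$. Hence every coefficient satisfies $c_k\ge 0$.

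With $c_0=1$ and $c_n=0$, expanding $f=(x-\rho)g$ gives
\[
b_k=c_k-\rho\,c_{k-1}\qquad (k=1,\dots,n).
\]
The identity $c_1=-\sum_{j\ge2}\delta_j$ gives $\rho=2t+c_1$, which recovers $b_1=-2t$. It also shows at once that $b_n=-\rho c_{n-1}\le 0$. The statement therefore reduces to the following claim about the nonnegative sequence $(c_k)$: if $c_2\le\rho c_1$ for some $\rho\ge c_1$, then $c_k\le\rho c_{k-1}$ for all $k$.

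I would prove this by induction on the number of irreducible real factors of $g$, letting $t$ vary in the induction hypothesis. Write $g=h\,q$, where $q=x+a$ or $q=x^2+2ax+a^2+\beta^2$. The plan is to express $f=(x-\rho)hq$ as $q\cdot f_0$ plus a correction. Here $f_0=(x-\rho)h$ is the polynomial attached to the shorter list with a shifted parameter $t_0$. Since $\rho$ is unchanged and $c_1(h)=c_1(g)-\deg(q)\,a$, this requires $\rho=2t_0+c_1(h)$, i.e. $t_0=t+\tfrac{\deg(q)\,a}{2}\ge 0$. Comparing coefficients then writes each $b_k$ as $b^0_k+2a\,b^0_{k-1}+(a^2+\beta^2)b^0_{k-2}$ in the quadratic case, and $b^0_k+a\,b^0_{k-1}$ in the linear case. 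Once all $b^0_j$ with $j\ge 1$ are shown to be $\le 0$, the only possibly positive contributions are the terms involving $b^0_0=1$. These occur only for $k\le\deg q$, i.e. only in $b_1$ and $b_2$, and both are controlled ($b_1=-2t$, and $b_2\le 0$ by hypothesis).

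The main obstacle is transferring the hypothesis $b_2\le 0$ to the smaller polynomial, i.e. showing $b^0_2\le 0$. Writing $w_j=-\delta_j$, the hypothesis reads
\[
b_2=-2t\,e_1(w)-\bigl(p_2(w)+e_2(w)\bigr)\le 0,
\]
where $e_1,e_2$ are the elementary symmetric functions and $p_2=\sum_j w_j^2$. Removing a factor changes $t$ to $t_0$, which makes $-2t_0e_1$ more negative, but it also changes $p_2+e_2$. A quadratic factor with large imaginary part $\beta$ contributes negatively to $p_2+e_2$, so removing it could destroy the inequality.

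My first attempt would be to choose which factor to remove cleverly. I would remove a factor that contributes nonnegatively to $p_2+e_2-\text{(its share of }2t\,e_1)$: a linear factor if one exists, otherwise a quadratic with the smallest ratio $\beta/a$. I would then verify the inequality for $h$ directly from the formula for $b_2$. If every factor is "bad", then all roots have $\beta^2>3a^2$; this is exactly the case where log-concavity of the coefficient sequence of $g$ fails. In that case I would argue directly, via the recursion $c_k(g)=c_k(h)+2a\,c_{k-1}(h)+(a^2+\beta^2)c_{k-2}(h)$, that the ratios $c_k/c_{k-1}$ for $k\ge3$ are dominated by $c_2/c_1$. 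The base cases (a single linear or quadratic factor, $n\le 3$) are immediate, since there only $b_2$ and $b_n\le 0$ occur.
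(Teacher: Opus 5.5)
The paper gives no proof of this lemma; it quotes it from \cite{lasmi}. So your argument has to stand on its own, and as written it has a genuine gap.

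The reduction $b_k=c_k-\rho c_{k-1}$ with $\rho=2t+c_1$ is correct, and so is the observation $b_n=-\rho c_{n-1}\le 0$. The bookkeeping for $f=qf_0$ with $t_0=t+\tfrac{\deg(q)\,a}{2}$ is also right; in fact $f=qf_0$ holds exactly, with no correction term. The induction step therefore goes through once $b_2^0\le 0$ is known.

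The gap is the transfer step, and there your choice of factor is backwards. Comparing coefficients of $x^{n-2}$ gives
\[
b_2^0=b_2+a^2+2at\quad(q=x+a),\qquad b_2^0=b_2+3a^2+4at-\beta^2\quad(q=x^2+2ax+a^2+\beta^2).
\]
So removing a linear factor with $a>0$ always raises $b_2$. A quadratic factor is safe to remove only when $\beta$ is large, namely $\beta^2\ge 3a^2+4at+b_2$. This is the opposite of ``a linear factor if one exists, otherwise the smallest $\beta/a$''.

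Concretely, take $t=0$ and $(\delta_2,\delta_3,\delta_4)=(-1,i,-i)$. Then $\rho=1$, $f=x^4-1$ and $b_2=0$. Your rule removes $x+1$ and leaves $f_0=(x-1)(x^2+1)=x^3-x^2+x-1$ with $b_2^0=1>0$, so the induction hypothesis cannot be invoked. Your classification of ``bad'' factors is inverted for the same reason. The ``direct argument'' you announce for the leftover case, that the ratios $c_k/c_{k-1}$ are dominated, is just the lemma restated.

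The plan can be repaired within your framework as follows.
\begin{enumerate}
\item Each $b_k=c_k-(2t+c_1)c_{k-1}$ is non-increasing in $t$. So you may replace $t$ by the least $t^*\ge 0$ with $b_2\le 0$. Write the $\delta_j$ as $-r_i$ and $-a_l\pm i\beta_l$ with $r_i,a_l\ge 0$.
\item Suppose $b_2=0$ at $t^*$. Using $e_2=\tfrac12(e_1^2-p_2)$, your formula for $b_2$ becomes $2\sum_l\beta_l^2=4t^*e_1+e_1^2+\sum_i r_i^2+2\sum_l a_l^2$. Since $e_1\ge 2\sum_l a_l$ and $(\sum_l a_l)^2\ge\sum_l a_l^2$, some pair must satisfy $\beta_l^2\ge 3a_l^2+4a_lt^*$, unless $g=x^{n-1}$. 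Remove that pair.
\item Suppose instead $b_2<0$ at $t^*$. Then $t^*=0$. Either some pair has $\beta_l^2\ge 3a_l^2+b_2$, and you remove it. Or every quadratic factor has $\beta^2<3a^2$. In that case every factor of $g$ has a log-concave coefficient sequence without internal zeros, hence so does $g$, and $c_k/c_{k-1}\le c_2/c_1\le\rho$ directly.
\end{enumerate}

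Alternatively, you can bypass the induction entirely. The Hurwitz matrix of a polynomial with all zeros in the closed left half-plane is totally nonnegative. Its $2\times 2$ minors give $c_{k-1}c_{k+2}\le c_kc_{k+1}$. Hence $c_k/c_{k-1}$ is non-increasing separately along odd $k$ and along even $k$, so it never exceeds $\max(c_1,c_2/c_1)\le\rho$.
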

\begin{theorem}\cite{lasmi}\label{realising five}
	Let $\delta_1, \delta_2,\ldots,\delta_n$ be complex numbers with real parts less than or equal to zero and
	let $\rho$ be a positive real number. Then the list $\sigma= \{\rho,\delta_1, \delta_2,\ldots,\delta_n\}$ is the spectrum of a non-negative matrix if and only if the following conditions are satisfied:
	\begin{enumerate}[(i)]
		\item The list $(\rho,\delta_1, \delta_2,\ldots,\delta_n)$ is closed under complex conjugation.
		\item $s_1=\rho+\sum_{i=2}^{n} \delta_i \geq 0$
		\item $s_2=\rho+\sum_{i=2}^{n} {\delta_i}^2 \geq 0$
		\item $s_1^2 \leq ns_2$
	\end{enumerate}
\end{theorem}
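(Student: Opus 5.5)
The necessity and sufficiency directions need different tools. Necessity follows from the general necessary conditions listed in Section~\ref{sec:NIEP}. Sufficiency I plan to get by reducing to a trace-zero list and then applying Lemma~\ref{lemma} with $t=0$, so that a companion matrix does the realizing. Throughout I read (iii) as $s_2=\rho^2+\sum \delta_i^2\ge 0$. I also read $n$ in (iv) as the total number of entries of $\sigma$, which is what the Loewy--London--Johnson inequality requires. I will re-index so that $\sigma=\{\rho,\delta_2,\dots,\delta_n\}$, to match the notation of Lemma~\ref{lemma}.

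\emph{Necessity.} Suppose $A\ge 0$ realizes $\sigma$. Since $A$ is real, its characteristic polynomial has real coefficients, which gives (i). Conditions (ii) and (iii) are $\tr(A)\ge 0$ and $\tr(A^2)\ge 0$. Condition (iv) is the moment inequality $s_k^m\le n^{m-1}s_{km}$ with $k=1$ and $m=2$.

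\emph{Sufficiency.} Assume (i)--(iv) and put $c=s_1/n\ge 0$. Consider the shifted list
\[
\sigma'=\{\rho-c,\ \delta_2-c,\dots,\delta_n-c\}.
\]
This list has the following properties.
\begin{itemize}
\item It is closed under conjugation.
\item Its trace is $s_1(\sigma')=s_1-nc=0$.
\item Its non-Perron entries satisfy $\mathrm{Re}(\delta_j-c)\le 0$, because $\mathrm{Re}\,\delta_j\le 0$ and $c\ge 0$.
\item Its second moment is $s_2(\sigma')=s_2-2cs_1+nc^2=s_2-s_1^2/n\ge 0$, by (iv).
\end{itemize}

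Now apply Lemma~\ref{lemma} to $\sigma'$ with $t=0$. The relation $\rho-c=2\cdot 0-\sum_j(\delta_j-c)$ holds precisely because $\sigma'$ has trace zero, so the lemma's Perron value is the first entry of $\sigma'$. The polynomial is
\[
f(x)=x^n+b_2x^{n-2}+\cdots+b_n .
\]
By Newton's identities \eqref{newton}, $b_2=\tfrac12\big(s_1(\sigma')^2-s_2(\sigma')\big)=-\tfrac12 s_2(\sigma')\le 0$. The lemma then gives $b_j\le 0$ for all $j\ge 3$.

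Let $C$ be the companion matrix of $f$. Its only possibly nonzero non-unit entries are $-b_n,\dots,-b_2$ and the zero coefficient of $x^{n-1}$, so $C\ge 0$ and $C$ has spectrum $\sigma'$. Then $A=C+cI$ is non-negative and has spectrum $\sigma'+c=\sigma$, as required.

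\emph{Main obstacle.} The delicate point is checking that the shifted list still satisfies the hypotheses of Lemma~\ref{lemma}. Two things must hold. First, the shift $c$ must be non-negative so that the real parts stay non-positive; this is where (ii) is used. Second, the trace-zero normalization must force $b_2=-s_2(\sigma')/2$, whose sign is controlled exactly by (iv). Everything else is a direct application of the lemma. Since the paper takes Lemma~\ref{lemma} as given, its sign-propagation argument is the only substantive input and I do not intend to reprove it.
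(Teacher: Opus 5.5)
Your proof is correct. The paper quotes this theorem from \cite{lasmi} and gives no proof of its own, so there is no in-paper argument to compare against. Your sufficiency argument is the standard Laffey--\v{S}migoc realization of the form $C+\alpha I$:
\begin{itemize}
\item shift by $c=s_1/n$ to get a trace-zero list;
\item apply Lemma~\ref{lemma} with $t=0$, which gives a non-negative trace-zero companion matrix $C$;
\item take $C+cI$.
\end{itemize}
Your readings of (iii) as $\rho^2+\sum\delta_i^2\ge 0$ and of $n$ as the length of $\sigma$ are the right ones. Under the literal indexing the ``only if'' direction would actually fail.

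Two small points.
\begin{itemize}
\item Lemma~\ref{lemma} requires the non-Perron sublist $\{\delta_j-c\}$, not all of $\sigma'$, to be closed under conjugation. This follows from (i) because $\rho-c$ is real.
\item Condition (iii) is never used in the sufficiency direction, since it follows from (iv).
\end{itemize}

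Compare this with the paper's order-$n$ generalization of Theorem~\ref{persymm realiz}. There Lemma~\ref{lemma} is applied to the unshifted polynomial with $2t=s_1$, which forces the stronger hypothesis $s_2-s_1^2\ge 0$. Your normalization to trace zero before invoking the lemma is exactly what lets the sharp condition $s_1^2\le ns_2$ suffice.
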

Ana I. Julio and Ricardo L. Soto were the first to introduce the persymmetric non-negative inverse eigenvalue problem (PNIEP). They established sufficient conditions for the existence of solutions to the PNIEP by applying Rado’s theorem \cite{AnRi}.The persymmetric realizability problem was subsequently investigated by A. I. Julio et al. in \cite{Ana}, where it was shown that the non-negative inverse eigenvalue problem (NIEP) and the PNIEP are equivalent for any realizable list of three complex numbers.
\begin{theorem}\cite{Ana}
Let $\Delta:= \{\delta_1, \delta_2,\delta_3\}$  be a realizable list of complex numbers. Then $\Delta$ is realizable by a persymmetric matrix.
\end{theorem}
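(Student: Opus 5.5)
Given a realizable list $\Delta=\{\delta_1,\delta_2,\delta_3\}$, I will build an explicit non-negative persymmetric matrix with characteristic polynomial $p(z)=(z-\delta_1)(z-\delta_2)(z-\delta_3)=z^3-s_1z^2+c_2z-c_3$. The reference point is the theorem of Loewy and London, which says realizability means exactly: $\Delta=\bar\Delta$, $\delta_1\ge|\delta_j|$, $s_1\ge 0$ and $s_1^2\le 3s_2$. So the task reduces to showing that every coefficient triple $(s_1,c_2,c_3)$ arising under these conditions is the characteristic polynomial of some non-negative persymmetric $3\times3$ matrix.

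\textbf{Step 1: normalize to trace zero.} A $3\times 3$ persymmetric matrix has the form
\[
A=\begin{pmatrix} a & b & c\\ d & e & b\\ f & d & a\end{pmatrix},
\]
since the conditions are $a_{11}=a_{33}$, $a_{12}=a_{23}$ and $a_{21}=a_{32}$. Adding $\frac{s_1}{3}I$ keeps both persymmetry and non-negativity and shifts every eigenvalue by $s_1/3$. I therefore first treat the shifted list $\Delta'=\Delta-\frac{s_1}{3}$, which has trace zero. The condition $s_1^2\le 3s_2$ is exactly $s_2(\Delta')\ge 0$, and for trace-zero lists of length three realizability is equivalent to $s_2'\ge0$ and $s_3'\ge0$. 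If this trace-zero reduction fails to cover some cases (it loses generality, because $\Delta$ may be realizable while $\Delta'$ is not), I will instead work directly with a diagonal $(a,e,a)$, taking $2a+e=s_1$.

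\textbf{Step 2: a trace-zero ansatz.} Take $a=e=0$. Then
\[
\det(zI-A)=z^3-(2bd+cf)z-(b^2f+d^2c).
\]
I need $2bd+cf=s_2'/2$ and $b^2f+d^2c=s_3'/3$. Setting $b=d$ and $c=f$ gives the system $2b^2+c^2=\alpha\ge0$ and $2b^2c=\beta\ge0$. Maximizing $2b^2c$ subject to $2b^2+c^2=\alpha$ gives the constraint $\beta\le 2(\alpha/3)^{3/2}$. Whether $\beta$ fits under this bound is precisely the question of whether the roots are real or complex, so this symmetric choice only handles part of the range. To cover the rest I will allow $b\ne d$ and $c\ne f$. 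The case $c=0$ gives $2bd=\alpha$ and $d^2\cdot 0+b^2f=\beta$, so $f=\beta/b^2$ can be chosen freely once $b,d$ are fixed. Hence any $\alpha>0,\beta\ge0$ is attained, with $b=1$, $d=\alpha/2$, $f=\beta$. When $\alpha=0$, realizability forces $\beta=0$, since a trace-zero list with $s_2=0$ and positive $s_3$ would violate the moment inequalities. This case is handled by the zero matrix.

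\textbf{Step 3: return to $\Delta$ and handle the obstacle.} Adding $\frac{s_1}{3}I$ back gives a persymmetric non-negative realization of $\Delta$ whenever $s_3(\Delta')\ge0$. The main obstacle is the remaining case, where $\Delta$ is realizable but $s_3(\Delta')<0$, for example $\{1,0,0\}$ shifted. Here I will use the general diagonal $(a,e,a)$ with $e=0$ and $a=s_1/2$, or the reverse choice, and the pattern
\[
\begin{pmatrix} a&b&0\\ d&e&b\\ f&d&a\end{pmatrix}.
\]
I will match $c_2$ and $c_3$ by solving for $bd$ and $f$ explicitly, then check non-negativity using $\delta_1\ge|\delta_j|$ and the Loewy–London conditions. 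I will split into cases by whether $\delta_2,\delta_3$ are real (using Suleimanova-type sign patterns) or complex conjugate.
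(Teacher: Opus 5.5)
The paper only quotes this result from \cite{Ana} and gives no proof, so your argument has to stand on its own. As written it does not close.

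\textbf{Steps 1--2 are correct.} Shifting by $\frac{s_1}{3}I$ and using $a=e=0$, $b=1$, $c=0$, $d=\alpha/2$, $f=\beta$ realizes $\Delta$ whenever $s_3(\Delta')\ge 0$. There are two slips, neither fatal:
\begin{itemize}
\item $\alpha=0$ does not force $\beta=0$. The list $\{1,\omega,\bar\omega\}$ has $s_1=s_2=0$, $s_3=3$ and is realized by a cyclic permutation. Your own choice $d=\alpha/2=0$ already covers this case, so nothing breaks.
\item $\{1,0,0\}$ is not an example of the obstacle. Its shift $\{\frac23,-\frac13,-\frac13\}$ has $s_3'=\frac29>0$.
\end{itemize}

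\textbf{The gap is Step 3.} It is only a plan, and the pattern it commits to cannot work in general. For $A=\begin{pmatrix} a&b&0\\ d&e&b\\ f&d&a\end{pmatrix}$ one computes
\[
\det(zI-A)=(z-a)^2(z-e)-2bd\,(z-a)-b^2f .
\]
Evaluating at $z=a$ forces $p(a)=-b^2f\le 0$ for some $a\in[0,s_1/2]$.

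Now take $\Delta=\{1,1,-1\}$. It is realizable, for instance by the exchange matrix $J$, which is itself persymmetric. Its shift $\{\frac23,\frac23,-\frac43\}$ has $s_3(\Delta')=-\frac{16}{9}<0$, so it falls into your Step 3. But $p(a)=(a-1)^2(a+1)>0$ for every $a\in[0,\frac12]$. So no diagonal choice works, neither $a=s_1/2$ nor the reverse, and no $b,d,f\ge0$ either. Setting the corner $c=0$ is what kills the construction; the missing idea is to use the anti-diagonal corners.

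\textbf{How to close the gap.} First pin down exactly which lists reach Step 3.
\begin{itemize}
\item \emph{Non-real pair.} Let the list be $\{\rho,x\pm iy\}$ and put $u=\rho-x\ge0$. The shifted list is $\{\frac{2u}{3},-\frac u3\pm iy\}$, and
\[
s_3(\Delta')=2u\Bigl(\frac{u^2}{9}+y^2\Bigr)\ge 0 .
\]
So complex lists never reach Step 3, and no real/complex case split is needed there.
\item \emph{Real list.} Order it as $\lambda_1\ge\lambda_2\ge\lambda_3$. Then $s_3(\Delta')=3\prod(\lambda_i-\frac{s_1}{3})<0$ if and only if $2\lambda_2>\lambda_1+\lambda_3$.
\end{itemize}
In the real case, $\lambda_1\ge|\lambda_3|$ gives $\lambda_1+\lambda_3\ge0$, so $2\lambda_2>\lambda_1+\lambda_3$ forces $\lambda_2>0$. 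Then
\[
\begin{pmatrix} \frac{\lambda_1+\lambda_3}{2}&0&\frac{\lambda_1-\lambda_3}{2}\\ 0&\lambda_2&0\\ \frac{\lambda_1-\lambda_3}{2}&0&\frac{\lambda_1+\lambda_3}{2}\end{pmatrix}
\]
is a non-negative persymmetric realization: the middle coordinate decouples with eigenvalue $\lambda_2$, and the corner block has eigenvalues $\lambda_1,\lambda_3$. Replacing your Step 3 with this case completes the proof.
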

They proved that, for certain realizable lists of four complex numbers, the NIEP and the PNIEP coincide, however, in the general case, the two problems are distinct. Furthermore, authors established a sufficient condition under which a trace-zero list of five complex numbers can be realized as the spectrum of a persymmetric non-negative matrix, given by
\begin{theorem}\cite{Ana}
Let $\Delta:= \{\delta_1, \delta_2,\ldots,\delta_5\}$ be a realizable list of complex numbers with $s_1=0.$ If $12s_5-5s_2s_3 \geq 0$, then $\Delta$ is persymmetrically realizable.
\end{theorem}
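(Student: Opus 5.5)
Here $n=5$, $s_1=0$, and we may assume $12s_5-5s_2s_3\ge 0$. The plan is to write down an explicit non-negative persymmetric companion-type matrix and fit its free entries to the characteristic polynomial. Since the trace is zero, the characteristic polynomial is
\[ p(z)=z^5+a_3z^3+a_2z^2+a_1z+a_0 . \]
Newton's identities (\ref{newton}) give
\[ a_3=-\tfrac{s_2}{2},\quad a_2=-\tfrac{s_3}{3},\quad a_1=\tfrac{s_2^2}{8}-\tfrac{s_4}{4},\quad a_0=\tfrac{5s_2s_3-12s_5}{60}. \]
In particular $a_0\le 0$ is exactly the hypothesis, and $a_3\le 0$, $a_2\le 0$ by condition (i) of Theorem \ref{realizing three}. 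Because $\Delta$ is realizable, condition (ii) of that theorem, $4s_4\ge s_2^2$, also gives $a_1\le 0$. So every coefficient after the leading one is non-positive.

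With all of $-a_3,-a_2,-a_1,-a_0$ non-negative, the first idea is the companion matrix. It realizes $\Delta$ but is not persymmetric, so I would instead use a zero-diagonal matrix with a persymmetric pattern.
\begin{itemize}
\item Put ones on the superdiagonal. This pattern is invariant under reflection in the anti-diagonal.
\item Put free parameters $x_1,x_2,x_3,x_4$ in the lower-left region in anti-diagonal-symmetric positions, namely the entries $(5,1)$, $(4,1)=(5,2)$, $(3,1)=(5,3)$, and $(2,1)=(5,4)$ (the last pair, $(2,1)$ and $(5,4)$, are also mirror images of each other).
\end{itemize}
The characteristic polynomial is obtained by summing over cycles in the digraph. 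The superdiagonal path $1\to2\to\dots\to5$, closed by back-edges, produces cycles whose weights enter the coefficients as non-negative combinations of the $x_i$. The constant term is $-x_1$ plus possible products from disjoint cycles.

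The next step is to solve the resulting system, triangular or nearly triangular, for $x_1,\dots,x_4$ in terms of $-a_3,-a_2,-a_1,-a_0$. I expect, for example:
\begin{itemize}
\item $x_4$ collects the two $2$-cycles, roughly $2x_4=-a_3$;
\item the $3$-cycles give $2x_3=-a_2$;
\item the $4$-cycles minus the product of disjoint $2$-cycles give $a_1$;
\item the remaining equation determines $x_1$ from $a_0$.
\end{itemize}

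The main obstacle is that the disjoint-cycle products, such as $x_4^2$ and $x_4x_3$, enter with signs that may force some $x_i$ negative. For instance, $x_2$ would be about $-a_1/2-x_4^2/2$ with a correction, and non-negativity may need more than $a_1\le0$. If this happens, the first thing I would try is to keep the two $2$-cycles non-disjoint, placing them on overlapping index pairs symmetric about the centre (e.g. $(2,1)/(5,4)$ replaced by $(3,2)/(4,3)$), so that no product terms appear. Failing that, I would use the extra inequality $4s_4\ge s_2^2$ in its full strength, since $-a_1=(2s_4-s_2^2)/8\ge s_2^2/16=a_3^2/4$, which is exactly the kind of bound needed to absorb an $x_4^2$ term.

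Once the entries are shown non-negative, persymmetry holds by construction, and equality of characteristic polynomials shows that the spectrum is $\Delta$.
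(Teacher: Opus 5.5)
There is a genuine gap, and it is in the first step, on which the rest of your argument depends. With $s_1=0$, Newton's identities give $e_5=\frac{s_5}{5}-\frac{s_2s_3}{6}$. So the constant coefficient is $a_0=-e_5=\frac{5s_2s_3-6s_5}{30}$, not $\frac{5s_2s_3-12s_5}{60}$. Hence $a_0\le 0$ is the condition $6s_5\ge 5s_2s_3$, which is strictly stronger than the hypothesis.

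Likewise, $a_1=\frac{s_2^2-2s_4}{8}$ is non-positive only if $2s_4\ge s_2^2$, and this does not follow from $4s_4\ge s_2^2$. Your inequality $-a_1\ge s_2^2/16$ is wrong; $4s_4\ge s_2^2$ only yields $-a_1\ge -s_2^2/16$.

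Both failures occur within the scope of the statement. The list $\Delta=\{3,1,-2,-1,-1\}$ is realizable (direct sum of realizations of $\{3,-2,-1\}$ and $\{1,-1\}$), has $s_1=0$, and has $12s_5-5s_2s_3=2520-1440=1080\ge 0$. Yet its characteristic polynomial is $z^5-8z^3-6z^2+7z+6$. So ``every coefficient after the leading one is non-positive'' is false. Since you never compute the characteristic polynomial of your matrix, nothing in the proposal shows that the entries fitted to the $z$ and constant coefficients are non-negative.

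Your first pattern does work, but for the opposite reason to the one you feared. The products of disjoint cycles enter with sign $+$, and they are exactly what lets the weaker hypotheses suffice. Take ones on the superdiagonal, $(2,1)=(5,4)=x_4$, $(3,1)=(5,3)=x_3$, $(4,1)=(5,2)=x_2$, $(5,1)=x_1$; this is the paper's pattern $A_1$ with $t=0$. Its cycles are:
the $2$-cycles on $\{1,2\}$ and $\{4,5\}$;
the $3$-cycles on $\{1,2,3\}$ and $\{3,4,5\}$;
two $4$-cycles of weight $x_2$;
one $5$-cycle of weight $x_1$.
Therefore
\[
\det(zI-A)=z^5-2x_4z^3-2x_3z^2+(x_4^2-2x_2)z+(2x_3x_4-x_1).
\]
Matching coefficients gives $x_4=\frac{s_2}{4}$, $x_3=\frac{s_3}{6}$, $x_2=\frac{x_4^2-a_1}{2}=\frac{4s_4-s_2^2}{32}$ and $x_1=2x_3x_4-a_0=\frac{12s_5-5s_2s_3}{60}$. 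Note the $+x_4^2/2$ in $x_2$, not $-x_4^2/2$. These entries are non-negative precisely by conditions (i) and (ii) of Theorem \ref{realizing three} together with the hypothesis; for the list above they are $4,3,\frac{9}{2},18$.

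Your fallback of moving the $2$-cycles to the overlapping mirror pair $(3,2),(4,3)$ would remove exactly these helpful terms. It would force $x_2=-a_1/2$ and $x_1=-a_0$, i.e.\ $2s_4\ge s_2^2$ and $6s_5\ge 5s_2s_3$, and both fail for $\{3,1,-2,-1,-1\}$.

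For comparison, the paper does not reprove this cited result separately; it is subsumed by Theorem \ref{main theorem}. Case 1 there uses a pattern with no such product in the constant term, so it needs $6s_5\ge 5s_2s_3$; the remaining lists are handled by the different patterns of Cases 2 and 3. The corrected single-matrix argument is the more direct route to this particular statement.
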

Following these results authors raise the question,  "whether a realizable list of complex numbers satisfying $s_1=0$ and $12s_5-5s_2s_3 < 0$, is persymmetrically realizable or not?",  they also mention that there exists a non-negative matrix $\begin{bmatrix}
	0 & 1& 0& 0& 0\\
	5 &0& 1& 0& 0\\
	0 &0& 0& 1& 0\\
	0 &0 &0& 0& 1\\
	30& 86& 30& 0 &0\\
\end{bmatrix}$
 which realizes the set $\Delta:=\{4,1,-3,-1+3i,-1-3i\}$ which satisfies $s_1 =0 $ and $12s_5 - 5s_2s_3 < 0$.
But the existence of a non-negative persymmetric matrix in this case remains unknown. This question is addressed and resolved in the following section.
\section{Main Results}
This section presents the main results of this study. In particular, a complete solution to the open problem described in the section \ref{sec:NIEP} is provided. Three persymmetric matrices which realizes the set $\Delta_1:=\{4,1,-3,-1+3i,-1-3i\}$ are:

$\begin{bmatrix}
0&0&0&0&43-2\sqrt{451}~\\
\frac{43+2\sqrt{451}}{15}&0&0&1&0\\
2\sqrt{\frac{86+4\sqrt{451}}{15}}&0&0&0&0\\
1&5&0&0&0\\
0&1&2\sqrt{\frac{86+4\sqrt{451}}{15}}&\frac{43+2\sqrt{451}}{15}&0
\end{bmatrix},\,\begin{bmatrix}
0&0&0&0&1\\
\sqrt{\frac{45}{2}}&0&0&1&0\\
\sqrt{30}&0&0&0&0\\
0&4&0&0&0\\
1&0&\sqrt{30}&\sqrt{\frac{45}{2}}&0
\end{bmatrix}$\\

and $\begin{bmatrix}
0&0&0&0&43+2\sqrt{451}~\\
\frac{43-2\sqrt{451}}{15}&0&0&1&0\\
2\sqrt{\frac{86- 4\sqrt{451}}{15}}&0&0&0&0\\
1&5&0&0&0\\
0&1&2\sqrt{\frac{86-4\sqrt{451}}{15}}&\frac{43- 2\sqrt{451}}{15}&0
\end{bmatrix}.$\\

These examples posses the following persymmetric forms 
\begin{center}
$ \begin{bmatrix}
0&0&0&0&a\\
b&0&0&1&0\\
c&0&0&0&0\\
1&d&0&0&0\\
0&1&c&b&0
\end{bmatrix}$,  $\begin{bmatrix}
0&0&0&0&a\\
b&0&0&1&0\\
c&0&0&0&0\\
0&d&0&0&0\\
1&0&c&b&0
\end{bmatrix}$
\end{center}
 where a,b,c,d are non-negative. The following theorem make use of these persymmetric forms to establish the equivalence between the NIEP and the PNIEP for realizable lists of five complex numbers with zero sum.
\begin{theorem}\label{main theorem}
Let $\Delta := \{\delta_1,\delta_2,\delta_3,\delta_4,\delta_5\}$ be a realizable list of five complex numbers with $s_1=0.$ Then $\Delta $ is persymmetricallly realizable.
\begin{prf}
Since $\Delta$ is a realizable list of complex numbers, it satisfies all the conditions of Theorem \ref{realizing three}. The proof is divided into three cases as follows.

\begin{enumerate}[(a)]
\item \textit{Case 1}: $\Delta$ satisfies $ 6s_5 - 5s_2s_3 \geq 0.$ Let $A$ be a persymmetric matrix of the form
$A:= \begin{bmatrix}
0&1&0&0&0\\
a&0&0&1&0\\
b&0&0&0&0\\
c&0&0&0&1\\
d&c&b&a&0
\end{bmatrix}.$
The characteristic polynomial of $A$ is
\begin{center}
$f(z)=z^5-2az^3 - 2cz^2 +(a^2-d)z-b^2.$  
\end{center}
Application of Newton’s identities yields equations relating $a,b,c,d$ to $s_k$'s which are given by
\[2a = \frac{s_2}{2},\quad
2c = \frac{s_3}{3},\quad
a^2 - d= \frac{s_2^2}{8} - \frac{s_4}{4}\quad \textit{and} \quad
-b^2 = \frac{s_2s_3}{6} - \frac{s_5}{5}.
\]
Solve these equations for the values of $a,b,c,d$  in terms of the $s_k$'s as 
\[a= \frac{s_2}{4},\quad
b=\frac{\sqrt{30}\sqrt{6s_5-5s_2s_3}}{30},\quad
c= \frac{s_3}{6}\quad \textit{and} \quad
d=\frac{4s_4-s_2^2}{16},
\]
and hence the non-negative matrix is
\begin{center}
$A= \begin{bmatrix}
0&1&0&0&0\\
\frac{s_2}{4}&0&0&1&0\\
\frac{\sqrt{30}\sqrt{6s_5-5s_2s_3}}{30}&0&0&0&0\\
\frac{s_3}{6}&0&0&0&1\\
\frac{4s_4-s_2^2}{16}&\frac{s_3}{6}&\frac{\sqrt{30}\sqrt{6s_5-5s_2s_3}}{30}&\frac{s_2}{4}&0
\end{bmatrix}.$
\end{center} 
By hypothesis every entry in $A$ is non-negative. Hence $A$ realizes $\Delta$ persymmetrically.
\item \textit{Case 2}: $\Delta$ satisfies $6s_5 - 5s_2s_3 < 0$ and $2s_4 - s_2^2 \geq 0.$ The proof splits into two parts.
\begin{enumerate}[(i)]
\item Let $s_5=0, s_2 \neq 0, s_3 \neq 0,$ and  $B:= \begin{bmatrix}
	0&0&0&0&1\\
	a&0&0&1&0\\
	b&0&0&0&0\\
	0&c&0&0&0\\
	0&0&b&a&0 
\end{bmatrix}$
be a persymmetric matrix with the characteristic polynomial 
$f(z)=z^5 -cz^3-b^2z^2-a^2cz+b^2c.$ Applying (\ref{newton}) provide the relations
\[c=\frac{s_2}{2},\quad
b^2=\frac{s_3}{3},\quad
a^2c= \frac{s_4}{4}-\frac{s_2^2}{8}\quad \textit{and} \quad
b^2c= \frac{s_2s_3}{6}-\frac{s_5}{5}.
\]
Solving the system for $a,b$ and $c$ yields
$a=\sqrt{(\frac{s_4}{4}-\frac{s_2^2}{8})\frac{2}{s_2}}=\sqrt{\frac{2s_4 -s_2^2}{4s_2}},\quad
b=\sqrt{\frac{s_3}{3}}\quad and \quad
c=\frac{s_2}{2}.$
Hence, the realizing matrix is obtained as
\begin{center}
$B=\begin{bmatrix}
0&0&0&0&1\\
\sqrt{\frac{2s_4 -s_2^2}{4s_2}}&0&0&1&0\\
\sqrt{\frac{s_3}{3}}&0&0&0&0\\
0&\frac{s_2}{2}&0&0&0\\
0&0&\sqrt{\frac{s_3}{3}}&\sqrt{\frac{2s_4 -s_2^2}{4s_2}}&0
\end{bmatrix}$
\end{center} 
\item Let $s_5 \neq 0, s_2 \neq 0 , s_3 \neq 0.$ In this case, consider the persymmetric matrix of the form 
$ C:= \begin{bmatrix}
0&0&0&0&a\\
b&0&0&1&0\\
c&0&0&0&0\\
0&d&0&0&0\\
1&0&c&b&0    
\end{bmatrix}$
with characteristic polynomial
$f(z)=z^5+(-a-d)z^3-ac^2z^2+(-ab^2d+ad)z+ac^2d.$ (\ref{newton}) gives the following relations between the coefficients of $f(z)$ and the $k^{th}$ moments $s_k.$
\[a+d=\frac{s_2}{2},\quad
ac^2=\frac{s_3}{3},\quad
ad-ab^2d= \frac{s_2^2}{8}- \frac{s_4}{4}\quad \textit{and}\quad
ac^2d=\frac{s_2s_3}{6}-\frac{s_5}{5}.
\]
Thus the values $a,b,c$ and $d$ are
\begin{align*}
a&= \frac{3s_5}{5s_3},\,\,
b=\frac{\sqrt{3}}{6}\sqrt{\frac{12s_5(5s_2s_3-6s_5)+25s_3^2(2s_4-s_2^2)}{s_5(5s_2s_3-6s_5)}},\,\,
c=\frac{\sqrt{5}s_3}{3\sqrt{s_5}}\quad \textit{and}\\
d&= \frac{5s_2s_3-6s_5}{10s_3}.
\end{align*}
\end{enumerate}
Then the matrix $C$ is
\begin{center}
$\begin{bmatrix}
0&0&0&0&\frac{3s_5}{5s_3}\\
\frac{\sqrt{3}}{6}\sqrt{\frac{12s_5(5s_2s_3-6s_5)+25s_3^2(2s_4-s_2^2)}{s_5(5s_2s_3-6s_5)}}&0&0&1&0\\
\frac{\sqrt{5}s_3}{3\sqrt{s_5}}&0&0&0&0\\
0&\frac{5s_2s_3-6s_5}{10s_3}&0&0&0\\
1&0&\frac{\sqrt{5}s_3}{3\sqrt{s_5}}&\frac{\sqrt{3}}{6}\sqrt{\frac{12s_5(5s_2s_3-6s_5)+25s_3^2(2s_4-s_2^2)}{s_5(5s_2s_3-6s_5)}}&0 
\end{bmatrix}.$
\end{center}
Under the given assumptions the matrix $C$ persymmetrically realizes $\Delta.$
\item \textit{Case 3}: 
$6s_5 - 5s_2s_3 < 0$ and  $2s_4-s_2^2 < 0.$ Consider a persymmetric matrix of the  form,
$D:=\begin{bmatrix}
0&0&1&0&0\\
0&0&0&1&0\\
c&0&0&0&1\\
b&d&0&0&0\\
a&b&c&0&0    
\end{bmatrix},$ with characteristic polynomial
$f(z)=z^5+(-2c-d)z^3-az^2+2cdz+ad-b^2.$
Newton’s identities yield the following relations between the coefficients of $f(z)$ and $s_k.$
\[2c+d=\frac{s_2}{2},\quad
a=\frac{s_3}{3},\quad
2cd= \frac{s_2^2}{8}- \frac{s_4}{4}\quad \textit{and}\quad
ad-b^2=\frac{s_2s_3}{6}-\frac{s_5}{5},
\]
and the values of $a,b,c,d$ are derived as 
\begin{align*}
a&=\frac{s_3}{3},\,
b=\sqrt{\frac{12s_5 - 5s_2s_3 + 5 s_3\sqrt{4s_4-s_2^2}}{60}},\,
c=\frac{s_2 -\sqrt{4s_4-s_2^2}}{8}\quad \textit{and}\\
d&=\frac{s_2 +\sqrt{4s_4-s_2^2}}{4}.
\end{align*}
Therefore the realizing matrix is
\begin{center}
$D=\begin{bmatrix}
0&0&1&0&0\\
0&0&0&1&0\\
\frac{s_2 -\sqrt{4s_4-s_2^2}}{8}&0&0&0&1\\
\sqrt{\frac{12s_5 - 5s_2s_3 + 5 s_3\sqrt{4s_4-s_2^2}}{60}}&\frac{s_2 +\sqrt{4s_4-s_2^2}}{4}&0&0&0\\
\frac{s_3}{3}&\sqrt{\frac{12s_5 - 5s_2s_3 + 5 s_3\sqrt{4s_4-s_2^2}}{60}}&\frac{s_2 -\sqrt{4s_4-s_2^2}}{8}&0&0    
\end{bmatrix}.$
\end{center}
All entries of $D$ are non-negative by hypothesis. Thus $D$ realizes $\Delta$ persymmetrically.\\
Hence, the equivalence between the NIEP and the PNIEP is established. $\hfill{\square}$
\end{enumerate}
\end{prf}
\end{theorem}
The following examples illustrates the theorem.
\begin{exam}
Consider the set $\Delta_1:= \{2,-1+i,-1-i,i,-i\}$. The first five moments are, 
$s_1=0,\, s_2 =2,\,s_3=12,\, s_4=10,\, s_5=40.$
Also we have 
\[4s_4 - s_2^2 = 36,\quad 6s_5 - 5s_2s_3 = 120 >0.\]
Then by case 1 of Theorem \ref{main theorem}, the realizing matrix is
$\begin{bmatrix}
		0&1&0&0&0\\
		\frac{1}{2}&0&0&1&0\\
		2&0&0&0&0\\
		2&0&0&0&1\\
		\frac{9}{4}&2&2&\frac{1}{2}&0
\end{bmatrix}$
\end{exam}
\begin{exam}
The set $\Delta_2:=\{6,-5,1,-1+4i,-1-4i\}$ and the first five moments are
$s_1=0,\, s_2 =32,\, s_3=186,\, s_4=2244,\, s_5=2410.$
Note that,
$2s_4 - s_2^2 = 3464 > 0,\, 6s_5 - 5s_2s_3 = -15300 < 0,\,
12s_5 - 5s_2s_3= -840 < 0.$
By case 2 of Theorem \ref{main theorem} the realizing matrix is
$\begin{bmatrix}
0&0&0&0&\frac{241}{31}\\
\frac{\sqrt{1910272}}{2\sqrt{61455}}&0&0&1&0\\
\frac{62}{\sqrt{482}}&0&0&0&0\\
0&\frac{255}{31}&0&0&0\\
1&0&\frac{62}{\sqrt{482}}&\frac{\sqrt{1910272}}{2\sqrt{61455}}&0
\end{bmatrix}$
\end{exam}
\begin{exam}
The first five moments corresponds to the set $\Delta_3:= \{6,4,-4,-3+4i,-3-4i\}$ are
$s_1=0,\, s_2 =54,\, s_3=450,\, s_4=754,\,\text{and}\,  s_5=8250.$
Also, note that $2s_4 - s_2^2 = -1408 < 0,\,6s_5 - 5s_2s_3 =-72000 < 0,\,
12s_5 - 5s_2s_3=-22500< 0.$
The realizing matrix of $\Delta_3$ by case 3 of Theorem \ref{main theorem} is
$\begin{bmatrix}
0&0&1&0&0\\
0&0&0&1&0\\
\frac{11}{2}&0&0&0&1\\
0&16&0&0&0\\
150&0&\frac{11}{2}&0&0
\end{bmatrix}$
\end{exam}
\section{Sufficient Conditions for Persymmetric realizability} 
\subsection{\textbf{Sufficient Conditions for Persymmetric Realizability of Spectra with Single Positive Eigenvalue}}
This section analyzes the persymmetric realizability of the set of complex numbers consisting of one element with positive real part (Perron eigenvalue) and all remaining elements with non-positive real parts. Theorem \ref{realising five} establish the non-negative realizability of these spectra.  The following theorem provides a sufficient condition for the persymmetric realizability of sets with these criteria of order $5.$
\begin{theorem}\label{persymm realiz}
Let $\{\delta_1,\delta_2,\delta_3,\delta_4\}$ be a list of complex numbers that is closed under conjugation and having non positive real part and $\delta$ be a positive real number. Then the list $\Delta:= \{\delta,\delta_1,\delta_2,\delta_3,\delta_4\}$ is persymmmetrically realizable if $s_1 \geq 0$ and $s_2-s_1^2 \geq 0.$
\begin{prf}
Suppose the set $\Delta$ is the spectrum of some persymmetric matrix $A,$ then its characteristic polynomial is given by, $p(z)=(z-\delta)(z-\delta_1)(z-\delta_2)(z-\delta_3)(z-\delta_4).$
Consider the  matrices $A_1, A_2, A_3, A_4,$ and $A_5$ with the following persymmetric structures, 
\begin{center}
	$A_1=\begin{bmatrix}
		0&1&0&0&0\\
		p&0&1&0&0\\
		q&0&t&1&0\\
		r&0&0&0&1\\
		s&r&q&p&0
	\end{bmatrix}$, 	$A_2=\begin{bmatrix}
	0&1&0&0&0\\
	0&0&1&0&0\\
	q&p&t&1&0\\
	r&0&p&0&1\\
	s&r&q&0&0
	\end{bmatrix}$,     $A_3=\begin{bmatrix}
	0&1&0&0&0\\
	p&0&1&0&0\\
	q&0&t&1&0\\
	r&q&0&0&1\\
	s&r&q&p&0
	\end{bmatrix}$,      $A_4=\begin{bmatrix}
	0&1&0&0&0\\
	p&0&1&0&0\\
	q&p&t&1&0\\
	r&q&p&0&1\\
	s&r&q&p&0
	\end{bmatrix}$,       $A_5=\begin{bmatrix}
0&1&0&0&0\\
0&0&1&0&0\\
0&p&t&1&0\\
r&q&p&0&1\\
s&r&0&0&t
\end{bmatrix}$ \quad where $p,q,r,s,t \in \mathbb{R}.$
\end{center}
\begin{enumerate}[(i)]
\item The characteristic polynomial of $A_1$ is 
\[f(z)= z^5-tz^4-2pz^3 +(2pt-2q)z^2+(p^2-2r)z+(-p^2 t+2pq-s).\]
The constants $p,q,r,s,t$ are expressed in terms of $s_i$ for $i=1,2\ldots5$ using Newton's identities are as follows
\begin{equation}\label{A_1p,q,r}
p=\frac{s_2-s_1^2}{4},\quad q=\frac{s_3-s_1^3}{6},\quad r=\frac{s_4+6s_1^2s_2-16s_1s_3-3s_2^2+12s_4}{96}
\end{equation}
\begin{equation}\label{A_1s,t}
s=\frac{7s_1^5-10s_1^3s_2+6s_1^2s_2+20s_1^2s_3+9s_2^2s_1-60s_1s_4-20s_2s_3+48s_5}{240},\quad t=s_1.
\end{equation}
The constants $p,q,r,s$ and $t$ can also be obtained in terms of members of  $ \Delta$ by comparing coefficients of $p(z)$ and $f(z).$ It remains to show that each of these constants are non-negative. Since  $s_1\geq 0 $ and $  s_2-s_1^2 \geq 0.$ From (\ref{A_1p,q,r}) and (\ref{A_1s,t}) we have,
$ s_1 \geq 0 \implies t \geq 0$ and 
$s_2-s_1^2\geq 0 \implies p\geq  0.$
Thus the Lemma \ref{lemma} applies to the polynomial $f(z)$ and the following holds.
\[2pt-2q \leq 0 \implies pt \leq q \implies q\geq 0\]
\[p^2-2r \leq 0 \implies p^2 \leq 2r \implies r \geq 0\]
\[2pq-p^2t-s \leq 0 \implies s \geq 2p(q-pt) \implies s\geq 0.\] 
Therefore all these constants are non-negative.

\item The characteristic polynomial of $A_2$ is $f(z)= z^5-tz^4-2pz^3 -2qz^2 -2rz-s.$ The constants $p,q,r,s,t$ are expressed in terms of $s_i$ for $i=1,2\ldots5$ utilizing (\ref{newton}) are as follows
\begin{equation}\label{A_2p,q,r}
	p=\frac{s_2-s_1^2}{4},\quad q=\frac{s_1^3+2s_3-3s_1s_2}{12},\quad r=\frac{6s_4+6s_1^2s_2-3s_2^2-s_1^4-8s_1s_3}{48}
\end{equation}
\begin{equation}\label{A_2s,t}
	s=\frac{ 24s_5 -30s_1s_4 20(s_2s_3 -s_3s_1^2) - 10s_2s_1^3 + 15s_1s_2^2 + s_1^5}{120},\quad t=s_1
\end{equation}
Since  $s_1\geq 0 $ and $  s_2-s_1^2 \geq 0.$ From (\ref{A_2p,q,r}) and (\ref{A_2s,t}) we have,
$ s_1 \geq 0 \implies t \geq 0$ and 
$s_2-s_1^2\geq 0 \implies p\geq  0.$
Thus the Lemma \ref{lemma} applies to the polynomial $f(z)$ and the following holds.
\[-2q \leq 0 \implies q \geq 0, \,\, -2r \leq 0 \implies r \geq 0,\,\, -s \leq 0 \implies s \geq 0.\]
Therefore, all these constants are non-negative.\\
\item The characteristic polynomial of $A_3$ is 
\[f(z)= z^5-tz^4-2pz^3 +(2pt-3q)z^2 +(p^2-2r)z-p^2t+2pq-s\]
The constants $p,q,r,s,t$ expressed in terms of $s_i$ for $i=1,2\ldots5$ using (\ref{newton}) are as follows
\begin{equation}\label{A_3p,q,r}
	p=\frac{s_2-s_1^2}{4},\quad q=\frac{s_3-s_1^3}{9},\quad r=\frac{s_1^4+6s_2s_1^2-16s_1s_3-3s_2^2+12s_4}{96}
\end{equation}
\begin{equation}\label{A_3s,t}
	s=\frac{s_1^5-10s_1^3s_2+80s_1^2s_3+45s_1s_2^2-180s_1s_4-80s_2s_3+144s_5}{720},\quad t=s_1.
	\end{equation}
Since  $s_1\geq 0 $ and $  s_2-s_1^2 \geq 0.$ From (\ref{A_3p,q,r}) and (\ref{A_3s,t}) we have,
$ s_1 \geq 0 \implies t \geq 0$ and 
$s_2-s_1^2\geq 0 \implies p\geq  0.$
Hence, the Lemma \ref{lemma} applies to the polynomial $f(z)$ and the following holds,
\[2pt-3q \leq 0 \implies 2pt \leq 3q \implies q \geq 0,\,\, p^2-r \leq 0 \implies p^2 \leq r \implies r \geq 0, \]
\[-p^2t+2pq-s \leq 0 \implies s\geq p(2q-pt) \implies s\geq 0.\]
This shows that all constants are non negative.\\
\item The characteristic polynomial of $A_4$ is 
\[f(z)= z^5-tz^4-4pz^3 +(2pt-3q)z^2 +(3p^2-2r)z-p^2t+2pq-s\]
The constants $p,q,r,s,t$ are expressed in terms of $s_i$ for $i=1,2\ldots5$ using (\ref{newton}) are as follows
\begin{equation}\label{A_4p,q,r}
	p=\frac{s_2-s_1^2}{8},\quad q=\frac{4s_3-3s_1s_2-s_1^3}{36},\quad r=\frac{s_1^4+30s_1^2s_2-64s_1s_3-15s_2^2+48s_4}{384}
\end{equation}
\begin{equation}\label{A_4s,t}
	s=-\frac{s_1^5+110s_1^3s_2-400s_1^2s_3-255s_1s_2^2+720s_1s_4+400s_2s_3-576s_5}{2880},\quad t=s_1
	\end{equation}
Since  $s_1\geq 0 $ and $  s_2-s_1^2 \geq 0.$ From (\ref{A_4p,q,r}) and (\ref{A_4s,t}) we have,
$ s_1 \geq 0 \implies t \geq 0$ and 
$s_2-s_1^2\geq 0 \implies p\geq  0.$
Thus the Lemma \ref{lemma} applies to the polynomial $f(z)$ and the following holds.
\[2pt-3q \leq 0 \implies 3q \geq 2pt \implies q \geq 0, \,\,
3p^2-r \leq 0 \implies 3p^2 \leq r \implies r \geq 0,\]
\[-p^2t+2pq-s \leq 0 \implies s\geq p(2q-pt) \implies s\geq 0.\]
This clearly implies all constants are non-negative.
\item The characteristic polynomial of $A_5$ is 
$ f(z)= z^5-tz^4-2pz^3 -qz^2 -2rz-s. $
The constants $p,q,r,s,t$ expressed in terms of $s_i$ for $i=1,2\ldots5$ using (\ref{newton}) are as follows
\begin{equation}\label{A_5p,q,r}
	p=\frac{s_2-s_1^2}{8},\quad q=\frac{s_1^3+2s_3-3s_1s_2}{6},\quad r=\frac{6s_4+6s_1^2s_2-3s_2^2-s_1^4-8s_1s_3}{48}
	\end{equation}
	\begin{equation}\label{A_5s,t}
	s=\frac{ 24s_5 -30s_1s_4 20(s_2s_3 -s_3s_1^2) - 10s_2s_1^3 + 15s_1s_2^2 + s_1^5}{120},\quad t=s_1.
	\end{equation}
Since  $s_1\geq 0 $ and $  s_2-s_1^2 \geq 0.$ From (\ref{A_5p,q,r}) and (\ref{A_5s,t}) we have,
$ s_1 \geq 0 \implies t \geq 0$ and 
$s_2-s_1^2\geq 0 \implies p\geq  0.$
Thus the Lemma \ref{lemma} applies to the polynomial $f(z)$ and the following holds, 
\[-q \leq 0 \implies q \geq 0,\,\,
-2r \leq 0 \implies r \geq 0,\,\, 
-s \leq 0 \implies s \geq 0.\]
\end{enumerate}
Since  all constants $p,q,r,s$ and $t$ are non-negative, a non-negative persymmetric matrix is possible with the structures of $A_1, A_2, A_3, A_4, A_5$ which realizes $\Delta.$  $\hfill{\square}$
\end{prf}
\end{theorem}
\begin{Rem}
If $s_1=0,$ the spectrum $\Delta$ admits a non-negative Toeplitz realization with the matrix structure $A_4.$ 
\end{Rem}
\begin{Rem}
Since the conditions are not necessary for a persymmetric realizability the converse implication fails. For example, 
	\begin{itemize}
		\item [(i)] Consider the set $\Delta:=\{10,0,-1,-1+6i,-1-6i\}.$ For this choice of $\Delta$, $s_2 - s_1^2 = -18 < 0$. Nevertheless, a persymmetric realization exists, as follows::
		\begin{center}
			$\begin{bmatrix}
				\frac{7}{5}&1&0&0&0\\
				\frac{53}{10}&\frac{7}{5}&1&0&0\\
				\frac{4626}{25}&0&\frac{7}{5}&1&0\\
				\frac{695597}{1000}&0&0&\frac{7}{5}&1\\
				\frac{9900378}{3125}&\frac{695597}{1000}&\frac{4626}{25}&\frac{53}{10}&\frac{7}{5}
			\end{bmatrix}$
		\end{center} 
		\item [(ii)] Consider the set $\Delta : =\{9,-1+4i,-1-4i,-1+4i,-1-4i\}.$ Then,   $s_2-s_1^2 = -4 <0, 
		$ yet there exists a nonnegative persymmetric matrix that realizes $\Delta,$ given as:
		\begin{center}
			$\begin{bmatrix}
				1&1&0&0&0\\
				4&1&1&0&0\\
				144&0&1&1&0\\
				448&0&0&1&1\\
				4352&448&144&4&1
			\end{bmatrix}$
		\end{center}
	\end{itemize} 
\end{Rem}
\begin{Rem}
Consider the set $\Delta:=\{\delta,a+ib,a-ib,c+id,c-id\}.$ Suppose $a, c \leq 0$ are given; then a value of $\delta$ can be chosen such that
$s_1=\delta+2(a+c) \geq 0$ i.e., $\delta \geq -2(a+c).$
From $s_2-s_1^2 \geq 0,$ a bound for $b^2+d^2$ is obtained as follows:
\[s_2-s_1^2 \geq 0 \implies \delta^2+2(a^2-b^2+c^2-d^2)-(\delta+2a+2c)^2 \geq 0\]
i.e., \[0 \leq b^2+d^2 \leq -2\delta(a+c)-4ac-(a^2+c^2).\]
\end{Rem}
 
\begin{exam}
Given $a=-1,\, c= -2.$ Since $s_1 \geq 0$, any $\delta$ satisfying $\delta \geq -2(a + c)$ can be chosen. For each such $\delta$, the bound for $b^2 + d^2$ can be determined from the inequality $s_2 - s_1^2 \geq 0$.
If $\delta= -2(a+c)=6$ then,
\[0 \leq b^2+d^2 \leq -2\delta(a+c)-4ac-(a^2+c^2).\]
i.e., $0 \leq b^2+d^2 \leq 23.$
Take $b=3$ and $d=2.$ Then, the persymmetric matrix realizing the set $\{6,-1+3i,-1-3i,-2+2i,-2-2i\}$ is
\begin{center}
$\begin{bmatrix}
0&1&0&0&0\\
5&0&1&0&0\\
50&0&0&1&0\\
\frac{281}{2}&0&0&0&1\\
980&\frac{281}{2}&50&5&0
\end{bmatrix}.$
\end{center} 
\end{exam}
The next objective is to generalize Theorem \ref{persymm realiz}. Ricardo L. Soto et.al. in \cite{AJ-RS-MS}, obtains the conditions for the existence of an $n\times n$ entrywise non-negative matrix with prescribed elementary divisors. In particular, they show that the companion matrix of the polynomial $\displaystyle p(z)=x^n-\sum_{k=1}^{n}c_{k-1}x^{n-k},$ is similar to a persymmetric matrix given by,
\begin{center}
	$P=\begin{bmatrix}
		p_0&1&0&0&0&0&\ldots&0&0\\
		p_1&p_0&1&0&0&0&\ldots&0&0\\
		p_2&0&p_0&1&0&0&\ldots&0&0\\
		\vdots&\vdots&\vdots&\vdots&\ddots&\vdots&\ldots&\vdots&\vdots\\
		p_{\frac{n-1}{2}}&0&0&\ldots&p_0&1&\ldots&0&0\\
		\vdots&\vdots&\vdots&\vdots&\vdots&\vdots&\ddots&\vdots&\vdots\\
		p_{n-3}&0&0&\cdots&0&0&\ldots&1&0\\
		p_{n-2}&0&0&\cdots&0&0&\ldots&p_0&1\\
		p_{n-1}&p_{n-2}&p_{n-3}&\cdots&p_{\frac{n-1}{2}}&p_{\frac{n+1}{2}}&\ldots&p_1&p_0
	\end{bmatrix}.$
\end{center}
An interesting result in their work is, the non-negativity of the companion matrix implies the non-negativity of the persymmetric matrix. This result motivates us to generalize our theorem to the case of arbitrary order $n.$
\begin{theorem}
Let $\{\delta_2,\delta_3,\ldots,\delta_n\}$ be a list of complex numbers is closed under conjugation and having non positive real part and $\delta$ be a positive real number. Then the list $\Delta:= \{\delta,\delta_2,\delta_3,\ldots,\delta_n\}$ is persymmmetrically realizable if $s_1 \geq 0$ and $s_2-s_1^2 \geq 0.$
\end{theorem}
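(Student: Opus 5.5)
The plan is to reduce the statement to the Laffey--Šmigoc sign lemma (Lemma \ref{lemma}) applied to a \emph{shifted} spectrum, and then to transfer non-negativity from a companion matrix to the persymmetric matrix $P$ of \cite{AJ-RS-MS}. Throughout, $s_k$ denotes the $k^{th}$ moment of $\Delta$.

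\textbf{Step 1 (shift to trace zero).} Put $t := s_1/n \geq 0$ and define the shifted list
\[
\Delta' := \{\delta - t,\ \delta_2 - t,\ \ldots,\ \delta_n - t\}.
\]
Its moments satisfy $s_1(\Delta') = 0$ and $s_2(\Delta') = s_2 - s_1^2/n$. The list $\{\delta_j - t\}_{j\ge 2}$ is still closed under conjugation, and its elements have real parts $\leq -t \leq 0$. Moreover, $\delta - t = -\sum_{j\ge 2}(\delta_j - t)$. This is exactly the quantity $\rho = 2t' - \sum_j \delta_j'$ of Lemma \ref{lemma} with parameter $t' = 0$.

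\textbf{Step 2 (apply Lemma \ref{lemma}).} Write
\[
q(x) := \prod_{\lambda\in\Delta'}(x-\lambda) = x^n + b_2x^{n-2} + \cdots + b_n .
\]
Since $s_1(\Delta') = 0$, Newton's identities (\ref{newton}) give $b_2 = -\tfrac12 s_2(\Delta') = -\tfrac12\bigl(s_2 - s_1^2/n\bigr)$. The hypothesis $s_2 - s_1^2 \geq 0$, together with $s_1^2 \geq s_1^2/n$, gives $s_2 - s_1^2/n \geq 0$, hence $b_2 \leq 0$. Lemma \ref{lemma} then yields $b_j \leq 0$ for all $j = 3,\ldots,n$. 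Consequently $q(x) = x^n - \sum_k c_{k-1}x^{n-k}$ with every $c_{k-1} = -b_k \geq 0$, so the companion matrix of $q$ is non-negative.

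\textbf{Step 3 (pass to a persymmetric matrix).} By the result of Soto et al.\ \cite{AJ-RS-MS} quoted above, this companion matrix is similar to the persymmetric matrix $P$. Here $p_0 = 0$, because $q$ has no $x^{n-1}$ term. Non-negativity of the companion matrix implies non-negativity of $P$. Finally, set $A := P + tI$. Adding a multiple of the identity preserves persymmetry (since $JIJ = I$) and preserves non-negativity (since $t \geq 0$), and it shifts every eigenvalue back by $t$. Hence $A$ is a non-negative persymmetric matrix with spectrum $\Delta$. Equivalently, $A$ is $P$ with $p_0 = t = s_1/n$, which matches the structure of $A_4$ in Theorem \ref{persymm realiz}.

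\textbf{Expected main obstacle.} The delicate point is Step 3. One must check that the similarity in \cite{AJ-RS-MS} really gives entries $p_1,\ldots,p_{n-1}$ that are non-negative whenever the $c_k$ are, in particular in the even-$n$ case, where the middle index $\frac{n-1}{2}$ in the displayed form of $P$ must be reinterpreted. If that transfer is only stated for odd $n$ or in a weaker form, I would instead compute $\det(zI-P)$ directly as a triangular recursion in the $p_k$ and solve for $p_k$ inductively. The sign pattern should then follow from $c_k \geq 0$, in the same way as the argument for $A_4$ in the case $n=5$.
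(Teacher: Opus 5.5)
Your argument is correct, but it certifies non-negativity differently from the paper.

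The paper does not shift. It applies Lemma \ref{lemma} directly to the characteristic polynomial $p(z)=z^n-a_1z^{n-1}-\cdots-a_n$ of $\Delta$, with the lemma's parameter equal to $s_1/2$ (so that $\rho=\delta$). It reads off $b_2=-a_2=-\tfrac12(s_2-s_1^2)$, concludes that the companion matrix of $p$ itself is non-negative, and then invokes the Soto et al.\ transfer for arbitrary trace. You instead shift by $s_1/n$ first, use the lemma with parameter $0$, transfer only the trace-zero companion matrix, and add $(s_1/n)I$ at the end. Both arguments end with a matrix of the form $P$ with $p_0=s_1/n$. Both also lean equally on the cited similarity, so your Step 3 worry is no worse than the paper's.

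What your route buys is a weaker hypothesis. Your Step 2 uses only $ns_2-s_1^2\ge 0$. It never needs the unshifted companion matrix to be non-negative, and that requirement is exactly what forces the paper's condition $a_2=\tfrac12(s_2-s_1^2)\ge 0$. Since $s_1\ge 0$ and $ns_2\ge s_1^2$ are necessary by Theorem \ref{realising five}, your proof actually shows that every realizable list of this type is persymmetrically realizable. In fact, the paper's own two examples showing that $s_2-s_1^2\ge 0$ is not necessary are precisely matrices of your form $P+(s_1/n)I$, with $p_0=7/5$ and $p_0=1$. Your construction is essentially the Laffey--\v{S}migoc realization $C+\alpha I$, with $C$ a trace-zero non-negative companion matrix, composed with the persymmetric similarity.

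One small correction to your closing aside: $P+tI$ does not have the structure of $A_4$, which is Toeplitz below the diagonal and carries $t$ only in the centre. For $n=5$, $P+tI$ has the pattern of $A_1$ with $t$ along the whole diagonal.
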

\begin{prf}
Suppose $\Delta$ is the spectrum of a persymmetric matrix $P,$ with the characteristic polynomial  $p(z)=z^n-a_{1}z^{n-1}-a_{2}z^{n-2}-\cdots-a_n.$
The companion matrix $C$ corresponding to $p(z)$ is 
\begin{center}
$C=\begin{bmatrix}
0&1&0&\cdots&0\\
0&0&1&\cdots&0\\
\vdots&\vdots&\vdots&\ddots&\vdots\\
0&0&0&\cdots&1\\
a_{n}&a_{n-1}&a_{n-2}&\cdots&a_1
\end{bmatrix}.$
\end{center}
The coefficients of $p(z)$ is obtained in terms of $s_i$ using (\ref{newton}) as,
 $a_{1}=s_1, \,\, a_{2}=\frac{s_2-s_1^2}{2}.$
 Since the hypothesis implies the non-negativity of both $a_{n-1}$ and $a_{n-2}$, apply Lemma \ref{lemma} on $p(z)$ to obtain $a_{i} \geq 0 \quad \text{for}\quad i=3,4,\ldots,n.$ Thus the companion matrix $C$ is non-negative. Therefore corresponding persymmetric matrix $P$ is also non-negative.  $\hfill{\square}$
\end{prf}
\subsection{\textbf{Sufficient Conditions for Persymmetric Realizability of Five-Element Spectra}}
This section focuses on establishing sufficient conditions for the realization of a list of five complex numbers by a non-negative persymmetric matrix. The following theorem states that certain criteria must be met to achieve such a non-negative persymmetric realization. 
\begin{theorem}\label{suffi condtn}
Let $\Delta := \{\delta_1, \delta_2, \ldots, \delta_5\},$ which is closed under complex conjugation. Then $\Delta$ is realizable by a non-negative persymmetric matrix if the following conditions are satisfied:
\begin{enumerate}[(a)]
\item $s_1\geq 0$
\item $5s_2-s_1^2 \geq 0$
\item $2s_1^3 - 15s_1 s_2 + 25s_3 \geq 0$
\item $ 2000 s_4+ 730 s_1^2 s_2 -73s_1^4 - 1600 s_1 s_3 - 625 s_2^2  \geq 0$
\item $394s_1^5 - 4925s_1^3 s_2 + 12125s_1^2 s_3 + 9375s_1 s_2^2
- 22500s_1 s_4 - 15625s_2 s_3 + 22500s_5 \geq 0$
\end{enumerate}
\begin{prf}
Let $A$ be the matrix associated with the spectrum $\Delta$, given by
\begin{center}
$A= \begin{bmatrix}
t&1&0&0&0\\
p&t&1&0&0\\
q&p&t&1&0\\
r&q&p&t&1\\
s&r&q&p&t
\end{bmatrix}$ where $p,q,r,s,t \in \mathbb{R}$
\end{center}
and the characteristic polynomial is $f(z)= z^5- 5tz^4+(-4p+10t^2)z^3+(12pt-3q-10t^3)z^2+(3p^2-12pt ^2+6qt-2r+5t^4)z+(2pq-3p^2t+4pt^3-3qt^2+2rt-s-t^5).$
The constants $p,q,r,s,t$ are expressed in terms of $s_i$ for $i=1,2\ldots5$ using (\ref{newton}) are as follows,
\begin{equation*}\label{A_p,q,r}
	p=\frac{5s_2-s_1^2}{40},\, q=\frac{2s_1^3 - 15s_1 s_2 + 25s_3}{225},\, r=\frac{2000 s_4+ 730 s_1^2 s_2 -73s_1^4 - 1600 s_1 s_3 - 625 s_2^2}{16000}
\end{equation*}
\begin{equation*}\label{A_s,t}
	s=\frac{394s_1^5 - 4925s_1^3 s_2 + 12125s_1^2 s_3 + 9375s_1 s_2^2
		- 22500s_1 s_4 - 15625s_2 s_3 + 22500s_5}{112500},\, t=\frac{s_1}{5}.
\end{equation*}
The non-negativity of the above constants ensures the non-negativity of the matrix. It is also  observed that the matrix is both persymmetric and Toeplitz. Hence, the proof is complete. $\hfill{\square}$
\end{prf}
\end{theorem}
\begin{exam}
Let $\Delta:=\{6,1+2i,1-2i,3+2i,3-2i\}.$ Then the corresponding non-negative persymmetric realizing matrix is given by,
\begin{center}
$A= \begin{bmatrix}
	\frac{14}{5}&1&0&0&0\\
	\frac{1}{10}&\frac{14}{5}&1&0&0\\
	\frac{496}{75}&\frac{1}{10}&\frac{14}{5}&1&0\\
	\frac{4027}{1000}&\frac{496}{75}&\frac{1}{10}&\frac{14}{5}&1\\
	\frac{889888}{9375}&\frac{4027}{1000}&\frac{496}{75}&\frac{1}{10}&\frac{14}{5}
\end{bmatrix}$
\end{center}
\end{exam}
\begin{exam}
Let $\Delta:=\{8,2,1,2+2i,2-2i\}.$ Then corresponding non-negative persymmetric realizing matrix is,
\begin{center}
$A= \begin{bmatrix}
		3&1&0&0&0\\
		3&3&1&0&0\\
		\frac{46}{3}&3&3&1&0\\
		56&\frac{46}{3}&3&3&1\\
		142&56&\frac{46}{3}&3&3
	\end{bmatrix}$
\end{center}
\end{exam}
\begin{Rem}
The conditions stated in Theorem \ref{suffi condtn} are sufficient but not necessary. For example, consider $\Delta:=\{4,2,-3,-1+3i,-1-3i\}.$ $\Delta$ is not realizable by a matrix of the form stated in the theorem, since the value of  $s=-\frac{614896}{3125}<0.$ However, it is realized by another persymmetric matrix given by:
\begin{center}
$\begin{bmatrix}
0&0&0&0&1\\
0&0&0&1&0\\
4\sqrt{2}&0&1&0&0\\
\frac{15\sqrt{15}}{8}&\frac{47}{8}&0&0&0\\
\frac{1}{8}&\frac{15\sqrt{15}}{8}&4\sqrt{2}&0&0
\end{bmatrix}$
\end{center}
\end{Rem}
\begin{Rem}
In the Theorem \ref{suffi condtn}, if $s_1 = 0$, a sufficient condition for non-negative persymmetric realization of a list of five complex numbers is given as follows:
\begin{enumerate}[(i)]
\item $s_i \geq 0, i=2,3.$
\item $16s_4-5s_2^2 \geq 0$
\item $36s_5-25s_2s_3 \geq 0$
\end{enumerate}
\end{Rem}
\section{Realizability with a perturbation of  Eigenvalues}
In this section we fix all the real eigenvalues and find the bounds on the imaginary part of a complex conjugate pair of eigenvalues required for realizability. Additionally, a perturbation is applied to the set to explore further conditions under which realizability is preserved. This direction is motivated by the work of Guo et al. in \cite{Guo}, who highlight the need for further research on the variation of the imaginary parts of complex eigenvalues. Two cases are considered: a list of four complex numbers with $s_1 = 0$, and a list of five complex numbers consisting of one element with positive real part (the Perron eigenvalue) and all remaining elements with non-positive real parts.
\subsection{\textbf{A Prescribed List of Four Complex Numbers}}
 Consider the set of four complex numbers $\Delta:=\{a,b,c+id,c-id\}$ with $s_1=0,$ where $a,b,c,d \in \mathbb{R}.$ Then $\Delta$ is realizable whenever it satisfies conditions of Theorem \ref{realizing four}. This section analyzes how changes in the imaginary part $d$ of the non-real eigenvalues $c \pm id$ affect the realizability of $\Delta$.
\begin{enumerate}[(i)]
\item \textbf{The real part not equal to zero}: Let $\Delta:=\{a,b,c+id,c-id\}$ with $c \neq 0.$ Note that $s_1$ remains unchanged, while $s_2$, $s_3$, and $4s_4 - s_2^2$ depend on $d.$ 
\begin{align*} 
	s_2&= a^2+b^2+2(c^2-d^2), \,\, s_3=a^3+b^3+2(c^3-3cd^2)\\
	4s_4-s_2^2&= 3(a^4+b^4)-2a^2b^2+4(c^4+d^4)-40c^2d^2-4(a^2+b^2)(c^2-d^2).
\end{align*}
Imposing the conditions of Theorem \ref{realizing four}, the bounds on $d^2$ are given as follows.
\begin{equation}
s_2\geq 0 \implies (a^2+b^2)+2(c^2-d^2) \geq 0 \implies 0\leq d^2\leq c^2 +\frac{(a^2+b^2)}{2}
\end{equation}
which gives an upperbound for $d^2.$
\begin{equation}
s_3\geq 0 \implies (a^3+b^3)+2(c^3-3cd^2) \geq 0 \implies 3cd^2 \leq c^3+\frac{(a^3+b^3)}{2}.
\end{equation}
This can yield both upper and lower bounds for $d^2$, depending on the sign of $c.$ Thus if $c> 0$
\begin{equation}
s_3 \geq 0 \implies  d^2 \leq \frac{c^3+(a^3+b^3)/2}{3c}
\end{equation} 
and if $c< 0$ 
\begin{equation}
s_3 \geq 0\implies d^2 \geq \frac{c^3+(a^3+b^3)/2}{3c}.
\end{equation}
The third condition condition yields 
$4s_4-s_2^2 \geq 0 $ that implies,
\begin{equation}
 4d^4+4(a^2+b^2-10c^2)d^2+3a^4+3b^4-2a^2b^2+4c^4-4(a^2+b^2)c^2 \geq 0.
\end{equation}
This can be viewed as a quadratic equation in $d^2$ in the following way. \[f(d^2)=4d^4+4(a^2+b^2-10c^2)d^2+(3a^4+3b^4-2a^2b^2+4c^4-4(a^2+b^2)c^2).\] The non-negativity of this quadratic equation depends solely on the sign of its discriminant. Considering that the discriminant can be either positive or negative, both cases must be examined.
\begin{enumerate}[(a)]
	\item If the discriminant is negative, $f(d^2) > 0$ for every real $d$.
	\item If the discriminant is positive, the quadratic has two real roots $d_1$ and $d_2.$  Since the leading coefficient is positive, the equation $f(d^2)$ is non-negative outside the interval $(d_1, d_2).$  
	\end{enumerate}
Therefore, we have the range of $d^2$, consequently the range of $d$, for fixed values of $a$, $b$, and $c$.
\item \textbf{The real part equal to zero}: Let $\Delta:=\{a,b,id,-id\}.$ Observe that in this case, both $s_1$ and $s_3$ do not depend on $d$.
Since $s_1 = 0$, it follows that $a = -b$. Thus, 
$s_2 = 2a^2-2d^2$ and $
4s_4-s_2^2= 4d^4+8a^2d^2+4a^4.
$
Also, $4s_4 - s_2^2$ is always non-negative and the bound for $d$ according to Theorem \ref{realizing four} is:
\begin{equation}\label{c not 0}
s_2 \geq 0  \implies 2a^2-2d^2 \geq 0\implies |d|\leq a.
\end{equation}
An upper bound for $d$ is obtained in terms of $a$ in this case.
\end{enumerate}
\begin{exam}
From the previous discussion, the range for $d$ with $a=4,b=-2,c=-1$ is, \[ 0 \leq d \leq \sqrt{11}.\]
\end{exam}
\begin{exam}
For $a=3,b=-3$  and $c=0,$ the range of $d$ is $|d| \leq 3.$ 
\end{exam}
This observation leads to an important conclusion that the imaginary part of the complex eigenvalues cannot be perturbed arbitrarily; there exists a specific bound for the imaginary part within which the set remains realizable. Therefore, variation  on the real eigenvalues must be considered to ensure the realizability of the perturbed set. Subsequently, the conditions required for the preservation of realizability under such changes are examined and the corresponding theorem is presented below.
\begin{theorem}
If $\Delta:=\{a,b,c+id,c-id\}$ is a realizable list where $a$ is the Perron root with $s_1(\Delta)=0$ and $a-b-2d\geq0,$ then the perturbed set  $\Delta^{\prime}:=\{a+t,b-t,c+i(d-t),c-i(d-t)\}$ is also realizable for every $t>0$.
\end{theorem}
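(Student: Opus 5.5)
The plan is to verify the three conditions of Theorem \ref{realizing four} (Reams) for $\Delta^{\prime}$, and to show that each of the relevant quantities is nondecreasing in $t$ along the perturbation. I take $d\ge 0$ (the case the hypothesis $a-b-2d\ge 0$ is meant for); when $d-t$ becomes negative the pair $c\pm i(d-t)$ is still a conjugate pair, so nothing changes. First, $s_1(\Delta^{\prime})=(a+t)+(b-t)+2c=s_1(\Delta)=0$. Since $a$ is the Perron root, $a\ge |b|$, so $u:=a+b\ge 0$ and $c=-u/2\le 0$. I also note $a-b+2d\ge a-b-2d\ge 0$.

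\textbf{Second and third moments.} A direct expansion gives
\[
s_2(\Delta^{\prime})=s_2(\Delta)+2t(a-b+2d)\ \ge\ s_2(\Delta)\ \ge 0 .
\]
For $s_3$ I expand $(a+t)^3+(b-t)^3+2c^3-6c(d-t)^2$ and use $a+b=-2c$. The $t^2$ terms combine to $-12ct^2\ge 0$. The linear terms combine to $3t(a+b)(a-b)+12cdt=3t(a+b)(a-b-2d)\ge 0$. Hence $s_3(\Delta^{\prime})\ge s_3(\Delta)\ge 0$. This is where the hypothesis $a-b-2d\ge 0$ enters essentially.

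\textbf{The quartic condition (main obstacle).} Expanding $4s_4-s_2^2$ directly in $t$ is messy, so I will factor the characteristic polynomial instead. For a zero-sum quartic $x^4+e_2x^2-e_3x+e_4$, Newton's identities (\ref{newton}) give $s_2=-2e_2$ and $s_4=2e_2^2-4e_4$. Hence $4s_4-s_2^2=4(e_2^2-4e_4)$. Next I write the polynomial as $(x^2-ux+P)(x^2+ux+m)$ with $P=ab$ and $m=c^2+d^2$. Then $e_2=m+P-u^2$ and $e_4=Pm$, and
\[
g:=e_2^2-4e_4=(m-P)^2-2u^2(m+P)+u^4 .
\]
Under the perturbation $u$ is fixed, while $P(t)=(a+t)(b-t)$ and $m(t)=c^2+(d-t)^2$. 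This gives
\[
\tfrac{d}{dt}(m-P)=(a-b-2d)+4t\ge 0,\qquad \tfrac{d}{dt}(m+P)=-(a-b+2d)\le 0 .
\]
Therefore
\[
g'(t)=2(m-P)\bigl(a-b-2d+4t\bigr)+2u^2(a-b+2d),
\]
which is nonnegative provided $m-P\ge 0$ along the path. Since $m-P$ is nondecreasing in $t$, it suffices to check $m(0)\ge ab$. If $ab\le 0$ this is trivial. If $b>0$, then $c^2=(a+b)^2/4\ge ab$ by AM--GM, so again $m(0)\ge ab$.

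Hence $4s_4(\Delta^{\prime})-s_2(\Delta^{\prime})^2\ge 4s_4(\Delta)-s_2(\Delta)^2\ge 0$. All three conditions of Theorem \ref{realizing four} therefore hold for $\Delta^{\prime}$, so $\Delta^{\prime}$ is realizable for every $t>0$.
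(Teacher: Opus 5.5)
Your proof is correct. For $s_2$ and $s_3$ it matches the paper, with the same formulas $s_2(\Delta')=s_2(\Delta)+2t(a-b+2d)$ and $s_3(\Delta')=s_3(\Delta)+6t^2(a+b)+3t(a+b)(a-b-2d)$. For the quartic condition you take a different and better route.

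The paper expands $4s_4(\Delta')-s_2(\Delta')^2$ as $4s_4(\Delta)-s_2(\Delta)^2$ plus an explicit quartic in $t$, then checks the signs of the coefficients. It handles the linear coefficient via $12(a^2+ab+b^2)+28ab-16d^2=8s_2(\Delta)+32ab\ge 0$, which needs $b\ge 0$. The case $b<0$ is dismissed with a remark about $t$ exceeding $b$, which is not an argument.

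Your factorization $(x^2-ux+P)(x^2+ux+m)$, with $u=a+b$ invariant, reduces the condition to $g=(m-P)^2-2u^2(m+P)+u^4$. Along the perturbation $m-P$ increases and $m+P$ decreases, so $g(t)\ge g(0)$ whatever the sign of $b$. In the paper's terms, your computation amounts to showing the linear coefficient equals $8\bigl[(c^2+d^2-ab)(a-b-2d)+(a+b)^2(a-b+2d)\bigr]$, with $c^2+d^2-ab=\frac{(a-b)^2}{4}+d^2\ge 0$. So it is nonnegative in all cases, and your argument closes the gap the paper leaves for $b<0$.

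Two minor remarks:
\begin{enumerate}
\item Your case split for $m(0)\ge ab$ is unnecessary, since $c^2-ab=\frac{(a-b)^2}{4}$ for all real $a,b$.
\item Your standing assumption $d\ge 0$ is genuinely needed, not just convenient. The paper uses it tacitly when it calls $s_2(\Delta')\ge 0$ clear. Take the realizable list $\{1,0,-\frac12\pm i\frac{\sqrt3}{2}\}$ written with $d=-\frac{\sqrt3}{2}$: the hypothesis $a-b-2d\ge 0$ holds, but $s_2(\Delta')=2t(1-\sqrt3)<0$ for every $t>0$.
\end{enumerate}
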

\begin{prf}
Assume $s_1(\Delta)=0=s_1(\Delta^\prime)$ so that $c=-\frac{(a+b)}{2}.$ Since $\Delta$ is a realizable list of four complex numbers with $s_1(\Delta)=0$ it satisfies  conditions of Theorem \ref{realizing four} i.e.,
\[s_2(\Delta)\geq 0, \,\, s_3(\Delta)\geq 0, \,\, 4s_4(\Delta)- s_2(\Delta)^2 \geq 0.\]
Note that $\Delta^{\prime}$ is realizable iff
\[s_2(\Delta^{\prime})\geq 0, \,\, s_3(\Delta^{\prime})\geq 0, \,\, 4s_4(\Delta^{\prime})- s_2(\Delta^{\prime})^2 \geq 0.\]
The moments of $\Delta^\prime$ in terms of the moments of $\Delta$ is,
\[s_2(\Delta^{\prime})= s_2(\Delta)+ 2t(a-b+2d),\,\, 
s_3(\Delta^{\prime})= s_3(\Delta)+ 6t^2(a+b)+3t(a+b)(a-b-2d)\]
\begin{align*}
 	4s_4(\Delta^{\prime}) - s_2(\Delta')^2
 	&= (4s_4(\Delta) - s_2(\Delta)^2)
 	+ 16t^4 + 16t^3(a-b-2d) 
 	+ 8t^2\big((a-b-d)^2 + 3d^2\big)+\\
 	& t\Big((a-b)(10a^2+12ab+10b^2+8d^2)
 	+ d \left[12(a^2+ab+b^2)  
 	+ 28ab -16d^2\right]\Big).
 \end{align*}
Assume that $b \geq 0$. From the hypothesis, it is clear that $s_2(\Delta') \geq 0$ and $s_3(\Delta') \geq 0$.  Since  the coefficients of $t^4,$ $t^3$ and $t^2$ are non-negative, we need  the coefficient of $t$ is also non-negative  for the non-negativity of  $4s_4(\Delta') - s_2(\Delta')^2.$ Note that,
$s_2(\Delta)= \frac{3}{2}(a^2+b^2) +ab-2d^2 \geq 0$  this implies $12(a^2+ab+b^2) + 28ab -16d^2 \geq 0$ therefore, $4s_4(\Delta') - s_2(\Delta')^2 \geq 0.$ This shows that $\Delta^{\prime}$ is realizable whenever $b\geq 0.$ If  the perturbation $t$ is bigger than $b$, then $\Delta^{\prime}$ is realizable for $b<0.$  $\hfill{\square}$
\end{prf}
\begin{corollary}
Let $\Delta:=\{a,b,c+id,c-id\}$ be a realizable list where $a$ is the Perron root with $s_1=0$ and $a-b-2d<0.$ Then the perturbed set  $\Delta^{\prime}:=\{a+t,b-t,c+i(d-t),c-i(d-t)\}$ is also realizable for all $t \geq \frac{(b+2d-a)}{4}$.
\end{corollary}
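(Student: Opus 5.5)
The plan is to follow the proof of the preceding theorem and reuse its expressions for the moments of $\Delta'$ in terms of those of $\Delta$. Since $s_1(\Delta')=s_1(\Delta)=0$, Theorem \ref{realizing four} reduces the claim to three inequalities for $t\ge \frac{b+2d-a}{4}$:
\[s_2(\Delta')\ge 0,\quad s_3(\Delta')\ge 0,\quad 4s_4(\Delta')-s_2(\Delta')^2\ge 0.\]
Set $e:=a-b-2d<0$, so that the threshold reads $t\ge -e/4$. Throughout, take $d\ge 0$, which we may do by conjugation, and use $a\ge |b|$, since $a$ is the Perron root. I will also use the three realizability inequalities for $\Delta$ itself.

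\textbf{Step 1: $s_2(\Delta')$.} Here $s_2(\Delta')=s_2(\Delta)+2t(a-b+2d)$. Since $a-b\ge 0$ and $d\ge0$, we have $a-b+2d\ge 0$, so this is immediate.

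\textbf{Step 2: $s_3(\Delta')$.} Write
\[s_3(\Delta')=s_3(\Delta)+3(a+b)\,t\,(2t+e).\]
Because $a+b\ge 0$, the added term is nonnegative once $t\ge -e/2$. On the window $-e/4\le t<-e/2$ it is a negative quadratic in $t$, and its most negative value is $-\tfrac{3}{8}(a+b)e^2$, attained at $t=-e/4$. So on this window I must use the slack in $s_3(\Delta)$. To do that I substitute $c=-(a+b)/2$ into $s_3(\Delta)=a^3+b^3+2c^3-6cd^2$, getting
\[s_3(\Delta)=\tfrac34(a+b)\bigl((a-b)^2+4d^2\bigr).\]
It then suffices to check
\[2(a-b)^2+8d^2\ge (a-b-2d)^2,\]
and this holds because $2x^2+8d^2-(x-2d)^2=(x+2d)^2\ge 0$ with $x=a-b$. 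So $s_3(\Delta')\ge 0$ for every $t\ge0$, and in particular on the stated range.

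\textbf{Step 3: $4s_4(\Delta')-s_2(\Delta')^2$, the main obstacle.} Start from the expansion in the theorem's proof:
\[4s_4(\Delta)-s_2(\Delta)^2+16t^4+16t^3e+8t^2\bigl((e+d)^2+3d^2\bigr)+L\,t,\]
where $a-b-d=e+d$ and $L$ is the linear coefficient. The top three terms factor as
\[8t^2\bigl(2t^2+2te+(e+d)^2+3d^2\bigr).\]
The quadratic $2t^2+2te+\dots$ in $t$ has its minimum at $t=-e/2$, where it equals $\tfrac{e^2}{2}+2de+4d^2>0$, so the top three terms are always nonnegative. For the linear term, the argument of the theorem already gives $L\ge0$ when $b\ge 0$, using $s_2(\Delta)\ge0$. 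When $b<0$, I would first try to show that the first two terms of $L$, namely $(a-b)(10a^2+12ab+10b^2+8d^2)$, dominate, using $a\ge|b|$. If that fails, I would bound $|L|t$ by the quartic part plus $4s_4(\Delta)-s_2(\Delta)^2$, via AM--GM on $8t^2\bigl((e+d)^2+3d^2\bigr)$ and $16t^4$. This is where the threshold $t\ge -e/4$ should enter, since it forces $t$ to be at least comparable to $|e|$, which makes $16t^3(t+e)$ controlled. I expect this last balancing to be the delicate point, and possibly to need the threshold adjusted.
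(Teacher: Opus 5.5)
The gap is Step 3 in the case $b<0$, which is the only nontrivial part of the statement. Steps 1 and 2 are correct. Step 2 can be shortened: since $a'+b'=a+b$, your identity $s_3=\tfrac34(a+b)\bigl((a-b)^2+4d^2\bigr)$ applied to $\Delta'$ gives $s_3(\Delta')=\tfrac34(a+b)\bigl((a-b+2t)^2+4(d-t)^2\bigr)\ge 0$ directly. One justification is wrong. You cannot normalize $d\ge 0$ ``by conjugation'', because $d\mapsto -d$ fixes $\Delta$ but turns $\Delta'$ into $\{a+t,b-t,c\pm i(d+t)\}$. What is true is that $a-b-2d<0$ together with $a\ge b$ forces $d>0$.

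The paper gives no separate proof of this corollary. The preceding theorem's argument only shows the linear coefficient $L$ is nonnegative for $b\ge 0$, so you cannot cite it for $b<0$. You list two strategies for that case but carry out neither. The first cannot work from $a\ge|b|$ alone, nor from the full Perron condition: $a=11$, $b=-9$, $c=-1$, $d=\tfrac{21}{2}$ satisfies both, yet $L=-370$.

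The missing input is $s_2(\Delta)\ge 0$. It enters through $12(a^2+ab+b^2)+28ab-16d^2=8s_2(\Delta)+32ab$, and substituting gives
\[
L=8d\,s_2(\Delta)+8(a+b)^2(a-b+d)+2(a-b)(a-b-2d)^2\ge 0 .
\]
Combine this with your nonnegative block $8t^2\bigl(2t^2+2te+(e+d)^2+3d^2\bigr)$. You get $4s_4(\Delta')-s_2(\Delta')^2\ge 4s_4(\Delta)-s_2(\Delta)^2\ge 0$ for every $t\ge 0$. No AM--GM balancing is needed, and the threshold never needs adjusting; in fact the conclusion holds for all $t\ge 0$.

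If you want the stated threshold to appear naturally, use this identity instead, valid whenever $c=-\tfrac{a+b}{2}$:
\[
4s_4-s_2^2=\tfrac14\bigl((a-b)^2+4d^2\bigr)^2+2(a+b)^2(a-b-2d)(a-b+2d).
\]
The perturbation keeps $a+b$ and $a-b+2d$ fixed and replaces $a-b-2d$ by $a-b-2d+4t$. So for $t\ge\tfrac{b+2d-a}{4}$ both summands are visibly nonnegative.
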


\subsection{\textbf{A prescribed list of five complex numbers}}
In the case of a list of five complex numbers, the spectrum may contain either one conjugate pair of complex eigenvalues or two conjugate pairs. Accordingly, these can be divided into two categories as given below.
\begin{enumerate}[(i)]
\item Consider the set $\Delta:=\{\delta,a_1+ib_1,a_1-ib_1,a_2+ib_2,a_2-ib_2\}$ where $\delta$ is the Perron root with $a_i \leq 0.$ This set consists of two pairs of complex conjugate eigenvalues. If $\Delta$ is realizable, then it satisfies the conditions of Theorem \ref{realising five}. The objective is to determine the range of $(b_1^2 + b_2^2)$ in terms of $\delta$ and $a_i,$ such that the resulting set is realizable. The sum of elements in  $\sigma$ is given by $s_1= \delta+2(a_1+a_2)$ which does not depend on $b_1$ and $b_2.$ Since $\Delta$ is realizable iff $s_1 \geq 0,$ choose  $a_1$ and $a_2$ such that  $\delta+ 2(a_1+a_2) \geq 0.$ The remaining constraints implied by the theorem are
\[s_2= \delta^2+(a_1+ib_1)^2+(a_1-ib_1)^2+(a_2+ib_2)^2+(a_2-ib_2)^2=\delta^2+2(a_1^2+a_2^2-b_1^2-b_2^2) \geq 0\]
i.e.,
\begin{equation}\label{first bound}
	b_1^2+b_2^2 \leq \frac{\delta^2+2(a_1^2+a_2^2)}{2}
\end{equation}
and $5s_2-s_1^2=6(a_1^2+a_2^2)+4\delta^2 -10(b_1^2+b_2^2)-4\delta(a_1+a_2) \geq 0$
i.e.,
\begin{equation}\label{second bound}
	b_1^2+b_2^2 \leq \frac{6(a_1^2+a_2^2)+4\delta^2-4\delta(a_1+a_2)}{10}.
\end{equation}
Using (\ref{first bound}) and (\ref{second bound}) the bounds of $b_1^2+b_2^2$ is given as
\begin{equation}\label{final bound}
0 \leq  b_1^2+b_2^2\leq \min \bigg\{\frac{\delta^2+2(a_1^2+a_2^2)}{2},\frac{6(a_1^2+a_2^2)+4\delta^2-4\delta(a_1+a_2)}{10}\bigg \}.
\end{equation}
If $s_1=0$ we have $\delta=-2(a_1+a_2)$
then, (\ref{final bound}) is 
\[0 \leq b_1^2 + b_2^2 \leq 3a_1^2+ 3a_2^2 + 4a_1a_2.\]
In general, consider the realizable set $\Delta:=\{\delta,a_1+ib_1,a_1-ib_1,a_2+ib_2,a_2-ib_2,\ldots,a_m+ib_m,a_m-ib_m\}$ with cardinality $|\Delta|=2m+1$ and each $a_i \leq 0.$ The objective is to determine the admissible bounds for $\displaystyle \sum_{i=1}^{m}b_i^2$ by fixing $\delta$ and $a_i$ for all $i=1,2,\ldots,m.$ Since $\Delta$ being realizable, the $k^{th}$ moments $s_k$ for $k=1,2$ satisfies
\begin{align*}
s_1&=\delta+2\sum_{i=1}^{m}a_i \geq 0\\
s_2&=\delta^2+2\bigg(\sum_{i=1}^{m}a_i^2-\sum_{i=1}^{m}b_i^2\bigg) \geq 0	\\
(2m+1)s_2 -s_1^2&= (2m+1)\bigg(\delta^2+2\sum_{i=1}^{m}a_i^2-2\sum_{i=1}^{m}b_i^2\bigg)-\bigg(\delta+2\sum_{i=1}^{m}a_i\bigg)^2 \geq 0.
\end{align*}
From these inequalities, the bounds for $\displaystyle\sum_{i=1}^{m}b_i^2$ are obtained as
\[0\leq \sum_{i=1}^{m}b_i^2 \leq \min \bigg\{\frac{m\delta^2-2\delta \left(\sum_{i=1}^{m}a_i\right)-2\left(\sum_{i=1}^{m}a_i\right)^2+(2m+1)\sum_{i=1}^{m}a_i^2}{2m+1} ,\frac{\delta^2 +2\sum_{i=1}^{m}a_i^2}{2}\bigg\}\]
\begin{exam} For the set  $\Delta:=\{4,-1+ib,-1-ib,-1+id,-1-id\}$ with $s_1=0, $ we have $0 \leq b^2+d^2 \leq 10.$
\[(b,d) \in \{(x,y) \in \mathbb{R}^2 : 0 \leq x^2 + y^2 \leq 10 \}.\]
\end{exam}

\item Consider the set $\Delta:=\{\delta,a_1,a_2,a_3+ib_3,a_3-ib_3\}$ of five complex numbers where only one complex conjugate pair occurs. $\delta$ is the Perron root and $a_i \leq 0.$  Let us determine the range of $b_3$ for which $\Delta$ is realizable. Assume $\Delta$ is realizable, then it satisfies conditions of Theorem \ref{realising five}, given as
$s_1 \geq 0,\,\, s_2 \geq 0,\,\, 5s_2-s_1^2\geq 0.$
The inequality \[s_1 \geq 0 \implies \delta+a_1+a_2+2a_3 \geq 0\]
which is independent of $b_3.$
\begin{align*}
	s_2 \geq 0 &\implies \delta^2+a_1^2+a_2^2+(a_3+ib_3)^2+(a_3-ib_3)^2 \geq 0\\
	&\implies b_3^2 \leq \frac{\delta^2+a_1^2+a_2^2+2a_3^2}{2}. 
\end{align*}
Non-negativity of $5s_2-s_1^2$ implies,
$5\delta^2+a_1^2+a_2^2+2(a_3^2-b_3^2) - (\delta+a_1+a_2+2a_3)^2 \geq 0. $ Therefore,
\[b_3^2 \leq \frac{4(\delta^2+a_1^2+a_2^2)+6a_3^2-2(\delta a_1+\delta a_2+a_1a_2)-4a_3(\delta+a_1+a_2)}{10}.\]
Hence, the upper bounds of $b_3$ are $M_1:=\sqrt{\frac{4(\delta^2+a_1^2+a_2^2)+6a_3^2-2(\delta a_1+\delta a_2+a_1a_2)-4a_3(\delta+a_1+a_2)}{10}}$ and  $M_2:=\sqrt{\frac{\delta^2+a_1^2+a_2^2+2a_3^2}{2}}$
The admissible range of $b_3$ is therefore, $|b_3| \leq \min\{M_1,M_2\}.$
\begin{exam}
Consider $\Delta:=\{8,-3,-1,-1+ib_3,-1-ib_3\}$
with $s_1=2.$ The upper bounds are,
$M_1=\sqrt{38}$  and $M_2=\sqrt{\frac{188}{5}}.$
The bound for $d$ is 	$|b_3| \leq \sqrt{\frac{188}{5}}$
\end{exam}
\end{enumerate}
From the above analysis, it is clear that perturbation of the imaginary parts up to a certain bound, while keeping the other eigenvalues fixed, results in a realizable spectrum. Another aspect to consider is whether realizability is preserved when the spectrum is perturbed  in opposite directions and simultaneously decrease the imaginary part of a complex conjugate pair. This is considered in the following theorem. 
\begin{theorem}\label{perturb thm}
Let $\Delta:=\{\delta,a,b,c+id,c-id\}$ be a realizable list where $\delta$ is the Perron root with $a$ and $b$ non-positive. Then the perturbed set $\Delta^{\prime}:=\{\delta+t,a-t,b,c+i(d-t),c-i(d-t)\}$ is also realizable for every $t>0.$
\end{theorem}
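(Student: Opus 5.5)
The plan is to reduce everything to the Laffey--\v{S}migoc characterization, Theorem \ref{realising five}, applied with $n=5$. First I check that both $\Delta$ and $\Delta'$ fit its hypotheses. In $\Delta'$ the positive real number is $\delta+t>0$. The remaining entries are $a-t$, $b$ and $c\pm i(d-t)$, and their real parts $a-t\le a\le 0$, $b\le 0$ and $c$ are all non-positive. Here I assume, as is implicit in applying Theorem \ref{realising five} to the realizable list $\Delta$, that $c\le 0$. The perturbation does not change $c$, so this property passes from $\Delta$ to $\Delta'$.

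Conjugation-closedness of $\Delta'$ is immediate. It then remains to verify $s_1(\Delta')\ge 0$, $s_2(\Delta')\ge 0$ and $s_1(\Delta')^2\le 5s_2(\Delta')$, and for this I would express the moments of $\Delta'$ through those of $\Delta$. For the first moment,
\[
s_1(\Delta')=(\delta+t)+(a-t)+b+2c=s_1(\Delta).
\]
For the second moment, note that $(c+i(d-t))^2+(c-i(d-t))^2=2c^2-2(d-t)^2$. A short expansion then gives
\[
s_2(\Delta')=s_2(\Delta)+2t\delta+t^2-2ta+t^2+2\bigl(2dt-t^2\bigr)=s_2(\Delta)+2t(\delta-a+2d).
\]

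The key step is the sign of $\delta-a+2d$. We have $\delta>0$ and $-a\ge 0$. Following the convention of the preceding subsection, the imaginary part $d$ of the conjugate pair is taken non-negative, so that ``decreasing the imaginary part'' means $d\mapsto d-t$. Under this convention $\delta-a+2d>0$, hence $s_2(\Delta')\ge s_2(\Delta)\ge 0$. Since $s_1$ is unchanged, we also get
\[
s_1(\Delta')^2=s_1(\Delta)^2\le 5s_2(\Delta)\le 5s_2(\Delta').
\]
Together with $s_1(\Delta')=s_1(\Delta)\ge 0$, all conditions of Theorem \ref{realising five} hold for $\Delta'$, which is therefore realizable for every $t>0$. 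This holds even when $t>d$, since $(d-t)^2$ enters only through the expansion above.

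I expect the main obstacle to be the sign bookkeeping rather than any computation. If $d<0$ were allowed, the increment $2t(\delta-a+2d)$ could be negative: the Perron condition only gives $\delta\ge\sqrt{c^2+d^2}\ge|d|$, not $\delta\ge 2|d|$. So I would state explicitly that $d\ge 0$, or else replace the pair by its conjugate labelling. I would likewise make explicit the non-positivity of $c$ needed to invoke Theorem \ref{realising five}. Once these conventions are fixed, the argument is just the two moment identities above.
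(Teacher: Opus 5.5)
Your proof is correct and is essentially the paper's argument: invoke Theorem~\ref{realising five}, observe $s_1(\Delta')=s_1(\Delta)$, and use $s_2(\Delta')=s_2(\Delta)+2t(\delta-a+2d)$. The paper's companion identity for $5s_2-s_1^2$ drops the factor $t$, which your derivation via $5s_2(\Delta)\le 5s_2(\Delta')$ avoids. The conventions you make explicit, $c\le 0$ and $d\ge 0$, are only tacit in the paper but genuinely needed: for instance $\Delta=\{1,0,0,-\tfrac12\pm i\tfrac{\sqrt3}{2}\}$, read with $d=-\tfrac{\sqrt3}{2}$, is realizable, yet gives $s_2(\Delta')=2t(1-\sqrt3)<0$.
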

\begin{prf}
Since $\Delta$ is realizable, it satisfies the conditions of Theorem \ref{realising five}. It follows that,
$s_1(\Delta)\geq 0, \, s_2(\Delta) \geq 0,\, 5s_2(\Delta)-s_1(\Delta)^2 \geq 0.$  Note that, $s_1(\Delta)=s_1(\Delta^{\prime}).$ Therefore $\Delta^{\prime}$ is realizable iff
$s_2(\Delta^{\prime})\geq 0, \, 5s_2(\Delta^{\prime})-s_1(\Delta^{\prime})^2 \geq 0.$
Write these inequalities in terms of the moments of $\Delta$ yields,
\[ s_2(\Delta^{\prime})=s_2(\Delta)+2t(\delta-a+2d)\]
\[5s_2(\Delta^{\prime})-s_1(\Delta^{\prime})^2=5s_2(\Delta)-s_1(\Delta)^2+10(\delta-a+2d).\]
Since these are non-negative by hypothesis, $\Delta^{\prime}$ is realizable for all $t \geq 0.$  $\hspace{3cm}\hfil{\square}$
\end{prf}
Suppose there are two conjugate pairs of complex eigenvalues, perturbations can be applied to both the real and imaginary parts, as well as to the Perron root. The following theorem presents the corresponding perturbation result.
\begin{theorem}
Let $\Delta:=\{\delta,a+ib,a-ib,c+id,c-id\}$ be a realizable list where $\delta$ is the Perron root with $a$ and $b$ are non-positive. Then the perturbed set $\Delta^{\prime}:=\{\delta+2t,(a-t)+i(b-t),(a-t)-i(b-t),c+id,c-id\}$ is also realizable for every $t>0.$
\end{theorem}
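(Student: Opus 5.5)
The plan is to follow the pattern of the proof of Theorem \ref{perturb thm} and reduce everything to the characterization of Laffey and Šmigoc in Theorem \ref{realising five}. I read the hypothesis as placing $\Delta$ in the setting of that theorem: $\delta>0$ is the Perron root, and the remaining elements $a\pm ib$, $c\pm id$ have non-positive real parts, so $a\le 0$ and $c\le 0$. I will also use $b\le 0$, as stated. Since $\Delta$ is realizable, Theorem \ref{realising five} gives
\[
s_1(\Delta)\ge 0,\qquad s_2(\Delta)\ge 0,\qquad 5s_2(\Delta)-s_1(\Delta)^2\ge 0 .
\]

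First I will check that $\Delta'$ is again of the type covered by Theorem \ref{realising five}. The list $\Delta'$ is closed under conjugation, and $\delta+2t>0$. Its remaining elements have real parts $a-t\le 0$ and $c\le 0$ for every $t>0$. So $\Delta'$ is realizable as soon as $s_1(\Delta')\ge 0$, $s_2(\Delta')\ge 0$ and $5s_2(\Delta')-s_1(\Delta')^2\ge 0$.

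Next I will express the moments of $\Delta'$ through those of $\Delta$. The first moment is unchanged:
\[
s_1(\Delta')=\delta+2t+2(a-t)+2c=s_1(\Delta).
\]
For the second moment, using $(x+iy)^2+(x-iy)^2=2(x^2-y^2)$,
\[
s_2(\Delta')=s_2(\Delta)+(4\delta t+4t^2)+2\big[(a-t)^2-a^2\big]-2\big[(b-t)^2-b^2\big]=s_2(\Delta)+4t(\delta-a+b+t).
\]
Consequently
\[
5s_2(\Delta')-s_1(\Delta')^2=5s_2(\Delta)-s_1(\Delta)^2+20t(\delta-a+b+t).
\]

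The key step, and the only real obstacle, is to show that $\delta-a+b+t\ge 0$. The term $b\le 0$ could a priori make this negative. I will handle it with the Perron–Frobenius necessary condition (iii) of Section \ref{sec:NIEP}, which gives $\delta\ge |a+ib|\ge |b|=-b$, so $\delta+b\ge 0$. Together with $-a\ge 0$ and $t>0$, this yields $\delta-a+b+t>0$. Then both $s_2(\Delta')$ and $5s_2(\Delta')-s_1(\Delta')^2$ are non-negative, and Theorem \ref{realising five} gives realizability of $\Delta'$ for every $t>0$.

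Finally I will remark on how the argument depends on the sign conventions. If instead $b\ge 0$ were intended, then $b-t$ decreases $|b|$ only for small $t$. In that case the same computation still works, since $\delta-a+b+t>0$ holds trivially.
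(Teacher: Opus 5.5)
Your proposal is correct and follows the paper's route. Like the paper, you express $s_1$, $s_2$ and $5s_2-s_1^2$ of $\Delta'$ through those of $\Delta$; your formula $s_2(\Delta')=s_2(\Delta)+4t(\delta-a+b+t)$ matches the paper's, and you then invoke Theorem \ref{realising five}. You improve on the paper by justifying $\delta-a+b\ge 0$ through the Perron--Frobenius bound $\delta\ge|a+ib|\ge -b$; the paper only asserts this ``by hypothesis'', which is not immediate since $b\le 0$.
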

\begin{prf}
This follows by the same argument as the proof of Theorem \ref{perturb thm}. The expressions for $s_2$ and $5s_2 - s_1^2$ of $\Delta^{\prime}$ are as follows:
\begin{align*}
s_2(\Delta^{\prime})&=s_2(\Delta)+4(t^2+t(\delta-a+b))\\	
5s_2(\Delta^{\prime})-s_1(\Delta^{\prime})^2&=(5s_2(\Delta)-s_1(\Delta)^2)+20(t^2+t(\delta-a+b))
\end{align*}
where both are non-negative by hypothesis. Therefore, $\Delta^{\prime}$ is realizable for all $t > 0 .\hfil{\square}$
\end{prf}
In general, the above result extends to $n$ values consisting of one element with positive real part (the Perron eigenvalue) and all remaining elements with non-positive real parts, as follows:
\begin{theorem}
Let $\Delta:=\{\delta,a_1+ib_1,a_1-ib_1,a_2+ib_2,a_2-ib_2,\ldots,a_m+ib_m,a_m-ib_m\}$ where  $\delta$ is the Perron root and $a_i \leq 0$ for $i=1,2,\ldots,m.$  Then the perturbed set $\Delta^{\prime}:=\{\delta+2t,a_1+ib_1,a_1-ib_1,a_2+ib_2,a_2-ib_2,\dots,(a_k-t)+i(b_k-t),(a_k-t)-i(b_k-t),\ldots,a_m+ib_m,a_m-ib_m\}$ is also realizable for any $k$ and for all $t \geq 0.$
\begin{prf}
The proof follows the same approach as that of Theorem \ref{perturb thm}. The corresponding expressions for $s_2$ and $5s_2 - s_1^2$ of $\Delta^{\prime}$ are given by:
\begin{align*}
s_2(\Delta^{\prime})&=s_2(\Delta)+4(t^2+t(\delta-a_k+b_k))\\
(2m+1)s_2(\Delta^{\prime})-s_1(\Delta^{\prime})^2&=((2m+1)s_2(\Delta)-s_1(\Delta)^2)+4(2m+1)(t^2+t(\delta-a_k+b_k)).
\end{align*}
Since both are non-negative by hypothesis, $\Delta^{\prime}$ is realizable for all $t > 0 .\hfil{\square}$
\end{prf}
\end{theorem}
\section{Conclusion}
To summarize, the results presented here primarily highlight the equivalence of the Non-negative Inverse Eigenvalue Problem (NIEP) and the Persymmetric Non-negative Inverse Eigenvalue Problem (PNIEP) for lists of five complex numbers with zero sum. Additionally, a sufficient condition for the existence of a persymmetric non-negative matrix corresponding to a prescribed list of complex numbers, provided the list satisfies certain conditions, has been established. Furthermore, another sufficient condition has been proven for the existence of a non-negative persymmetric matrix realizing a list of five complex numbers. A detailed investigation has also been conducted on the range of imaginary part of the complex eigenvalues, explores how small perturbations in this component influence the realizability of the spectrum. Perturbation results have also been established for cases where the imaginary part of complex eigenvalues is varied.
\bibliographystyle{elsarticle-num}  

\end{document}